\newtheorem{theorem}{Theorem}[section]
\newtheorem{lemma}{Lemma}[section]
\theoremstyle{definition}
\newtheorem{definition}{Definition}[section]
\theoremstyle{remark}
\newtheorem{example}{Example}[section]
\numberwithin{equation}{section}
\begin{document}
\label{pageinit}

\date{}

\title{Theories of truth for countable languages which conform to classical logic}

\author{Seppo Heikkil\"a $^\star$}

\maketitle

Department of Mathematical Sciences, University of Oulu

BOX 3000, FIN-90014, Oulu, Finland

\markboth{}{Theories of truth}


\footnotetext[2010]{\textit{{\bf Mathematics Subject Classification}}: 03B10, 03B65, 03D80, 91F20, 97M80\\
{\bf Keywords}: language, countable, sentence, valuation, true, false, meaning, truth predicate, logic, classical}

\begin{abstract}
\noindent Every countable language which conforms to classical logic is shown to have an extension  which conforms to classical logic, and  
has a definitional theory of truth. That extension has a semantical theory of truth, if every sentence of the
object language is valuated by its meaning either as true or as false. 
These theories contain both a truth predicate and a non-truth predicate.
Theories are equivalent when the sentences of the object language are valuated by their meanings.

\end{abstract}

\baselineskip 14pt

 \section{Introduction}\label{S1}
Based on 'Chomsky Definition' (cf. \cite{C}) we assume that a language is a nonempty countable set of sentences with finite length, and formed by a countable set of elements.
A theory of syntax is also assumed to provide a language with rules to construct well-formed sentences, formulas etc.

A language is said to conform to classical logic if it has, or if it can be extended to have at least the following properties ('iff' means 'if and only if'):
\smallskip

(i) It contains logical symbols $\neg$ (not), $\vee$ (or), $\wedge$ (and), $\rightarrow$ (if...then),  $\leftrightarrow$ (iff), $\forall$ (for all) and $\exists$ (exists), and the following sentences: If $A$ and $B$ are (denote) sentences of the language, so are $\neg A$, $A\vee B$, $A\wedge B$, $A\rightarrow B$ and $A\leftrightarrow B$.
If $P(x_1,\dots,x_m)$, $m\ge 1$, is a formula of the language with $m$ free variables, then  $P$ is called a predicate with arity $m$ and domain $D_P=D_P^1\times\cdots\times D_P^m$, where each $D_P^i$ is a subset of a set $D$ of objects, called the domain of discourse, if the following properties hold.\\
(p1) Every object of $D$ is named by a term. Denote by $N_P^i$ the set of those terms which name the objects of $D_P^i$, $i=1,\dots,m$.\\
(p2) $P(b_1,\dots,b_m)$ is a sentence of that language obtained from $P(x_1,\dots,x_m)$ by substituting for each $i=1,\dots,m$ a term $b_i$   of $N_P^i$ for $x_i$ in every free occurrence of $x_i$ in $P$.\\
 If $P$ is a predicate with arity $m\ge 1$, then the sentences  $q_1x_1\dots q_mx_m P(x_1,\dots,x_m)$, where each $q_i$ is either $\forall$ or $\exists$, are in the language. $\neg P$ is also a predicate with arity $m$ and domain $D_P$.
\smallskip

(ii)  The sentences of that language are so valuated as true or as false that the following rules of classical logic are valid: If $A$ and $B$ denote sentences of the language, then
$A$ is true iff $\neg A$ is false, and $A$ is false iff $\neg A$ is true; $A\vee B$ is true iff $A$ or $B$ is true, and false iff  $A$ and $B$ are false; $A\wedge B$ is true iff  $A$ and $B$ are true, and false iff $A$ or $B$ is false; $A\rightarrow B$ is true iff $A$ is false or $B$ is true, and false iff $A$ is true and $B$ is false; $A\leftrightarrow B$ is true iff  $A$ and $B$ are both true or both false, and false iff $A$ is true and $B$ is false or $A$ is false and $B$ is true. If $P$ is a predicate with arity $m$, then the sentence of the form $q_1x_1\dots q_mx_m P(x_1,\dots,x_m)$ is true iff the sentence $P(b_1,\dots,b_m)$ is true for all  $b_i\in N_P^i$ when $q_i$ is $\forall$, and for some $b_i\in N_P^i$ when $q_i$ is $\exists$. $q_1x_1\dots q_mx_m P(x_1,\dots,x_m)$ is  false iff the sentence $P(b_1,\dots,b_m)$ is false   for all $b_i\in N_P^i$ when $q_i$ is $\exists$, and for some $b_i\in N_P^i$ when $q_i$ is $\forall$.
\smallskip

(iii) The language is bivalent, i.e., every sentence of it is either true or false.
\smallskip

Every countable and bivalent first-order language with or without identity conforms to classical logic. A classical example is
the language of arithmetic in its standard interpretation.

We say that a language has a theory of truth if truth values are assigned to its sentences, and if it contains a predicate $T$
which satisfies
\smallskip 

 $T$-rule: \qquad \quad $T(\left\lceil A\right\rceil)\leftrightarrow A$ is true for every sentence $A$ of the language.
\smallskip

Term $\left\lceil A\right\rceil$ which names the sentence $A$ is defined below. A predicate $T$ which satisfies $T$-rule is called a truth predicate. A theory of truth 
is said to be definitional if truth values are defined for sentences, and semantical if truth values of sentences are determined by their meanings.
\smallskip

Main results of this paper are:
\smallskip

Every countable language which conforms to classical logic has an extension which has properties (i)--(iii), and has a definitional theory of truth. That extension has a semantical theory of truth if every sentence of the object language is valuated by its meaning either as true or as false. These theories of truth contain truth and  non-truth predicates. 
Theories are equivalent when the sentences of the object language are valuated by their meanings.


\section{Extended languages}\label{S2}

Assume that an object language $L_0$  conforms to classical logic, and is without a truth predicate. $L_0$ has by definition
an extension which has properties (i) -- (iii). That extension, denoted by  $L$, is called a basic extension of $L_0$.  
\smallskip

The language $L_T$ is formed by adding to $L$ extra formulas  $T(x)$ and $\neg T(x)$, and sentences $T(\bf n)$ and $\neg T(\bf n)$, where $\bf n$  goes through all numerals which denote  numbers $n\in\mathbb N_0= \{0,1,2,\dots\}$ ({\bf 0}=0, {\bf 1}=S0, {\bf 2}=SS0,\dots). Neither valuation nor meaning is yet attached to these sentences. Numerals are added, if necessary, to terms of $L_T$.
Choose a G\"odel numbering to sentences of $L_T$ (see Wikipedia). The  G\"odel number of a sentence denoted by $A$ is denoted by \#$A$, and the numeral of \#$A$ by $\left\lceil A\right\rceil$, which names the sentence $A$.
\smallskip

If $P$ is a predicate of $L$ with arity $m$, then $P(b_1,\dots b_m)$ is a sentence of $L$ for each  $(b_1,\dots b_m)\in N_P=N_1^P\times\dots\times N_P^m$, and $\left\lceil P(b_1,\dots b_m)\right\rceil$ is the numeral of its G\"odel number. Thus
$T(\left\lceil P(b_1,\dots,b_m)\right\rceil)$ and $\neg T(\left\lceil P(b_1,\dots,b_m)\right\rceil)$ are sentences of $L_T$ for each $(b_1,\dots,b_m)\in N_P$, so that  they are determined by predicates of $L_T$ having the domain $D_p$ of $P$. Denote these predicates by
$T(\left\lceil P(\dot x_1,\dots,\dot x_m)\right\rceil)$ and $\neg T(\left\lceil P(\dot x_1,\dots,\dot x_m)\right\rceil)$, and add them to $L_T$. Notation $P(\dot x_1,\dots,\dot x_m)$ stands for the result of formally replacing variables $x_1,\dots x_m$ of $P(x_1,\dots,x_m)$ by  terms of $N_P$ (cf. \cite{HH}).

Add to the language $L_T$  sentences $\forall xT_1(x)$, $\exists xT_1(x)$,
$\forall xT_1(\left\lceil T_2(\dot x)\right\rceil)$ and $\exists xT_1(\left\lceil T_2(\dot x)\right\rceil)$, where $T_1,T_2\in\{T,\neg T\}$, and sentences  $q_1x_1\dots q_mx_m T(\left\lceil P(\dot x_1,\dots,\dot x_m)\right\rceil)$  and
 $q_1x_1\dots q_mx_m \neg T(\left\lceil P(\dot x_1,\dots,\dot x_m)\right\rceil)$ for each predicate $P$ of $L$ with arity $m \ge 1$  and for each $m$-tuple $q_1,\dots,q_m$, where $q_i$'s are  $\forall$ or $\exists$.
\smallskip

The so obtained extension of the language $L_T$ is denoted by $\mathcal L_0$.
\smallskip

When a language $\mathcal L_n$, $n\in\mathbb N_0$, is defined, let $\mathcal L_{n+1}$ be a language which  is formed by adding to $\mathcal L_n$ those  of the following sentences which are not in $\mathcal L_n$: $\neg A$, $A\vee B$, $A\wedge B$, $A\rightarrow B$ and $A\leftrightarrow B$, where $A$ and $B$ are sentences of $\mathcal L_n$.
\smallskip

The language $\mathcal L$ is defined as the union of languages $\mathcal L_n$, $n\in\mathbb N_0$.
Extend the G\"odel numbering of the sentences of $L_T$ to those of $\mathcal L$, and denote by $\mathcal D$ the set of G\"odel numbers of  the sentences of $\mathcal L$.
\smallskip

Our main goal is to extract from $\mathcal L$ a sublanguage which under suitable valuations of its sentences has properties (i)--(iii) given in Introduction, and has a theory of truth.
   
 At first we define some subsets of $\mathcal L$.

Denote by $\mathcal P^m$ the set of those predicates of $L$ which have arity $m$, $\mathcal P=\bigcup_{m=1}^\infty \mathcal P^m$, and
\begin{equation}\label{E50}
\begin{cases}
Z_3^m=\{q_1x_1\dots q_mx_m T(\left\lceil P(\dot x_1,\dots,\dot x_m)\right\rceil):P\in\mathcal P^m\hbox{ and $q_1x_1\dots q_mx_m P(x_1,\dots,x_m)$ is true}\};\\
Z_4^m=\{q_1x_1\dots q_mx_m\neg T(\left\lceil P(\dot x_1,\dots,\dot x_m)\right\rceil):P\in\mathcal P^m\hbox{ and $q_1x_1\dots q_mx_m \neg P(x_1,\dots,x_m)$ is true}\}.
\end{cases}
\end{equation}
Define subsets $Z_1(U)$, $Z_2(U)$, $U\subset \mathcal D$, and  $Z_i$, $i=1\dots 4$, of $\mathcal L$ by

\begin{equation}\label{E20}
\begin{cases}
Z_1(U)=\{\hbox{$T(\bf n)$: ${\bf n}=\left\lceil A\right\rceil$, where $A$ is a sentence of $\mathcal L$ and  \#$A$  is in $U$}\},\\
Z_2(U)=\{\hbox{$\neg T(\bf n)$: ${\bf n}=\left\lceil A\right\rceil$, where $A$ is a sentence of $\mathcal L$ and  \#[$\neg A$]  is in $U$}\},\\
Z_1=\{\neg\forall x T(x),\exists xT(x),\neg\forall x\neg T(x),\exists x\neg T(x)\},\\
Z_2=\{\neg(\forall xT_1(\left\lceil T_2(\dot x)\right\rceil)), \exists xT_1(\left\lceil T_2(\dot x)\right\rceil),\ T_1,T_2\in\{T,\neg T\},\\
Z_3=\bigcup_{m=1}^\infty Z_3^m,\ Z_4=\bigcup_{m=1}^\infty Z_4^m.
\end{cases}
\end{equation}

Subsets  $L_n(U)$, $n\in\mathbb N_0$, of $\mathcal L$ are defined
 recursively as follows.

\begin{equation}\label{E201}
L_0(U)=\begin{cases} Z =\{A: A \hbox { is a true sentence of $L\}$ if $U=\emptyset$ (the empty set)},\\
    Z\cup Z_1(U)\cup Z_2(U)\cup Z_1\cup Z_2\cup Z_3\cup Z_4	\hbox{ if $\emptyset\subset U\subset \mathcal D$}.
\end{cases}
\end{equation}										

When a subset $L_n(U)$ of $\mathcal L$  is defined for some $n\in\mathbb N_0$, and when $A$ and $B$ are sentences of $\mathcal L$, denote
\begin{equation}\label{E203}
\begin{cases}
L_n^0(U)=\{\neg(\neg A):A \hbox{ is in } L_n(U)\},\\
L_n^1(U)=\{A\vee B:A  \hbox{ and $B$,  or $A$ and $\neg B$, or $\neg A$ and $B$ are in } L_n(U)\},\\
L_n^2(U)=\{A\wedge B:A \hbox{ and $B$ are in } L_n(U)\},\\
L_n^3(U)=\{A\rightarrow B:\neg A \hbox{ and $B$, or $\neg A$ and $\neg B$, or $A$ and $B$ are in } L_n(U)\},\\
L_n^4(U)=\{A\leftrightarrow B:\hbox{ $A$  and $B$,  or $\neg A$ and $\neg B$ are in } L_n(U) \},\\
L_n^5(U)=\{\neg(A\vee B):\neg A \hbox{ and $\neg B$ are in } L_n(U)\},\\
L_n^6(U)=\{\neg(A\wedge B):\neg A \hbox{ and $B$, or $\neg A$ and $\neg B$, or $A$ and $\neg B$ are in } L_n(U)\},\\
L_n^7(U)=\{\neg(A\rightarrow B):A \hbox{ and $\neg B$ are in } L_n(U)\},\\
L_n^8(U)=\{\neg(A\leftrightarrow B):\hbox{$A$ and $\neg B$, or $\neg A$ and $B$ are in } L_n(U) \},
\end{cases}
\end{equation}
and define
\begin{equation}\label{E204}
L_{n+1}(U)=L_n(U)\cup \bigcup_{k=0}^8 L_n^k(U).
\end{equation}
The above constructions imply that $L_n^k(U)\subseteq L_{n+1}^k(U)$ and
 $L_n(U)\subset L_{n+1}(U)\subset \mathcal L$  for all $n\in\mathbb N_0$ and $k=0,\dots,8$.
Define a subset $L(U)$ of $\mathcal L$ by
\begin{equation}\label{E21}
L(U)=\bigcup_{n=0}^\infty L_n(U).
\end{equation}

\section{Properties of consistent subsets of $\mathcal D$}\label{S30}

Recall that $\mathcal D$ denotes the set of G\"odel numbers of the sentences of $\mathcal L$.
When $U$ is a subset of $\mathcal D$, denote by  $G(U)$ the set of G\"odel numbers of the sentences of $L(U)$  defined by \eqref{E21}:
\begin{equation}\label{E22}
G(U)=\{\#A:A \hbox{ is a sentence of } L(U)\}.
\end{equation}

A subset $U$  of $\mathcal D$ is called consistent if  both \#$A$ and \#[$\neg A$] are not in $U$ for any sentence $A$ of $\mathcal L$ .

\begin{lemma}\label{L201} Let $U$ be a consistent subset of $\mathcal D$. Then for no sentence $A$ of $\mathcal L$ both $A$ and $\neg A$ belong to $L(U)$, and $G(U)$ is consistent.
\end{lemma}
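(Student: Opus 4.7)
The plan is to prove the stronger syntactic statement first — no sentence $A$ has both $A$ and $\neg A$ in $L(U)$ — and then observe that consistency of $G(U)$ is an immediate reformulation, since $G(U)$ is by definition the set of Gödel numbers of the sentences of $L(U)$ and Gödel numbering is injective.

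Since $L(U) = \bigcup_{n\ge 0} L_n(U)$ with $L_n(U)\subseteq L_{n+1}(U)$, I would prove by induction on $n$ that no sentence $A$ of $\mathcal L$ has both $A$ and $\neg A$ in $L_n(U)$. For the base case $n=0$, examine the seven (or just $Z$, if $U=\emptyset$) constituents of $L_0(U)$ as in \eqref{E201}. For $Z$ the property follows from bivalence of $L$: if $A$ is a true sentence of $L$ then $\neg A$ is false, so $\neg A\notin Z$. For $Z_1(U)$ versus $Z_2(U)$: having $T(\mathbf n)\in Z_1(U)$ and $\neg T(\mathbf n)\in Z_2(U)$ with $\mathbf n=\lceil A\rceil$ forces $\#A\in U$ and $\#[\neg A]\in U$, directly violating consistency of $U$. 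For each of $Z_1,Z_2,Z_3,Z_4$ note that their members have fixed outer syntactic shape (e.g.\ sentences in $Z_3$ begin with quantifiers followed by $T(\lceil\cdots\rceil)$, while the syntactic negation would begin with $\neg$ followed by a quantifier, which does not match any listed form), so no member's negation lies in any of these sets. Finally, different constituents contain sentences of different syntactic kinds ($Z$ lies in $L$, hence uses no truth predicate; $Z_1(U),Z_2(U)$ are unquantified atomic $T$-sentences; $Z_1,Z_2$ are the specific quantified $T$-sentences listed; $Z_3,Z_4$ involve predicates of $L$ inside $T$), so their negations cannot cross between constituents either.

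For the inductive step, suppose $L_n(U)$ has the property and that $C,\neg C\in L_{n+1}(U)=L_n(U)\cup\bigcup_{k=0}^8 L_n^k(U)$. If both are already in $L_n(U)$, the inductive hypothesis gives the contradiction. Otherwise at least one of them lies in some $L_n^k(U)$, and the outer syntactic shape of $C$ determines the possible $k$. For instance, if $C=A\vee B\in L_n^1(U)$ and $\neg C=\neg(A\vee B)\in L_n^5(U)$, the defining clauses in \eqref{E203} give either $A\in L_n(U)$ or $B\in L_n(U)$ together with $\neg A,\neg B\in L_n(U)$, so that either $\{A,\neg A\}\subseteq L_n(U)$ or $\{B,\neg B\}\subseteq L_n(U)$, contradicting the inductive hypothesis. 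The same pattern works for every connective pairing $(L_n^1,L_n^5)$, $(L_n^2,L_n^6)$, $(L_n^3,L_n^7)$, $(L_n^4,L_n^8)$, and for $L_n^0$ paired against $L_n(U)$ (if $C=\neg\neg A\in L_n^0(U)$ with $A\in L_n(U)$ and $\neg C\in L_n(U)$, then reading $\neg C$ as $\neg(\neg A)\in L_n^0(U)$ ultimately forces some earlier contradiction; and the only way $\neg C\in L_n^k(U)$ with appropriate $k$ is via $L_n^0(U)$, which requires $\neg A\in L_n(U)$, giving $\{A,\neg A\}\subseteq L_n(U)$). The case in which $C\in L_n(U)$ and $\neg C\in L_n^k(U)$ is handled symmetrically by unfolding $\neg C$'s defining clause.

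The principal obstacle is simply the combinatorial bookkeeping: one must check the base case cannot produce a pair $A,\neg A$ across any of the seven constituents of $L_0(U)$, and one must run the inductive step through nine cases in \eqref{E203}. The proof itself is conceptually uniform because each clause in \eqref{E203} is designed to mirror exactly one classical truth-table row, so every alleged contradiction at level $n+1$ unfolds into a strictly simpler contradiction at level $n$. Once the induction is complete, taking the union over $n$ yields the first assertion, and the second assertion is just the reformulation $\#A\in G(U)\iff A\in L(U)$.
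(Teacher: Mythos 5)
Your proposal follows essentially the same route as the paper's own proof: induction on $n$ showing no $A,\neg A$ pair occurs in $L_n(U)$, with the base case checked constituent-by-constituent in \eqref{E201} (bivalence of $L$ for $Z$, consistency of $U$ for the $Z_1(U)$/$Z_2(U)$ clash, syntactic form for $Z_1,\dots,Z_4$) and the inductive step run through the paired clauses of \eqref{E203}, followed by the union-over-$n$ argument and the reformulation of consistency of $G(U)$ via \eqref{E22}. The sketch is correct and matches the paper's argument in structure and in the key observations.
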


\begin{proof} At first we show that there is no sentence $A$ in $\mathcal L$ such that both $A$ and $\neg A$ belong to $L_0(U)$.

If $U=\emptyset$, then $L_0(U)$ is  by \eqref{E201} the set $Z$ of true sentences of $L$. If  $A$ is a sentence of $L$, then only one of the sentences $A$ and $\neg A$ is true, and hence in $Z=L_0(U)$, since $L$ has properties (i)--(iii).

Assume next that $U$ is nonempty. As a consistent set $U$ is a proper subset of $\mathcal D$.

Let ${\bf n}$ be a numeral. If $T({\bf n})$ is in $L_0(U)$, it is in $Z_1(U)$, so that, by \eqref{E20}, ${\bf n}=\left\lceil A\right\rceil$, where \#$A$ is in $U$. Since $U$ is consistent, then \#[$\neg A$] is not in $U$. Thus, by \eqref{E20},
$\neg T({\bf n})$ is not in $Z_2(U)$, and hence not in $L_0(U)$. This result implies also that $T({\bf n})$ is not in $L_0(U)$ if $\neg T({\bf n})$ is in $L_0(U)$.

\eqref{E20} and \eqref{E201} imply that sentences $\exists xT(x)$, $\neg\forall xT(x)$, $\neg\forall x\neg T(x)$ and $\exists x\neg T(x)$ are in $Z_1$, and hence in $L_0(U)$, but  their negations are  not in $L_0(U)$.

By the definitions \eqref{E20} and \eqref{E201} of $Z_2$ neither both $\exists xT_1(\left\lceil T_2(\dot x)\right\rceil)$ and
 $\neg(\exists xT_1(\left\lceil T_2(\dot x)\right\rceil))$,  nor  both  $\forall xT_1(\left\lceil T_2(\dot x)\right\rceil)$ and $\neg(\forall xT_1(\left\lceil T_2(\dot x)\right\rceil))$,  are in $L_0(U)$ for any $T_1,T_2\in \{T,\neg T\}$.
By the definitions \eqref{E50}, \eqref{E20} and \eqref{E201}  the sentences of $Z_3$ and  $Z_4$, but not their negations, are in $L_0(U)$.

The above proof shows that for no sentence $A$ of $\mathcal L$ both $A$ and $\neg A$ belong to $L_0(U)$.
\smallskip

Make the induction hypothesis:  
\begin{enumerate}
\item[(h0)]
 For no sentence $A$ of $\mathcal L$ both $A$ and $\neg A$ belong to $L_n(U)$.
\end{enumerate}
Applying (h0) and \eqref{E203} we obtain the following results.

(h0) and the definition of  $L_n^0(U)$ imply that if a sentence $A$ is in $L_n(U)$, then none of the odd-tuple negations of $A$ are in $L_{n+1}(U)$, and if $\neg A$ is in $L_n(U)$, then none of the even-tuple negations  of $A$ are in $L_{n+1}(U)$.

If $A\vee B$ is in $L_{n+1}(U)$, it is in  $L_n^1(U)$, whence $A$ or $B$ is in $L_n(U)$. If $\neg(A\vee B)$ is in $L_{n+1}(U)$, it is in  $L_n^5(U)$, in which case $\neg A$ and  $\neg B$ are in $L_n(U)$. Thus $A\vee B$ and $\neg(A\vee B)$ are not both in $L_{n+1}(U)$, for otherwise  both $A$ and $\neg A$ or both  $B$ and $\neg B$ are in $L_n(U)$, contradicting with (h0).

$A\wedge B$ and $\neg(A\wedge B)$ cannot both be in $L_{n+1}(U)$, for otherwise $A\wedge B$ is in  $L_n^2(U)$, i.e., both $A$ and $B$ are in
$L_n(U)$, and $\neg(A\wedge B)$ is in  $L_n^6(U)$, i.e., at least one of $\neg A$ and $\neg B$ is in $L_n(U)$. Thus both $A$ and $\neg A$ or both  $B$ and $\neg B$ are in $L_n(U)$, contradicting with (h0).

If $A\rightarrow B$ is in $L_{n+1}(U)$, it is in  $L_n^3(U)$, so that $\neg A$ or $B$  is in $L_n(U)$. If $\neg(A\rightarrow B)$ is in $L_{n+1}(U)$, it is in $L_n^7(U)$, whence both $A$ and $\neg B$ are in $L_n(U)$. Because of these results and (h0) the sentences $A\rightarrow B$ and $\neg(A\rightarrow B)$ are not both in $L_{n+1}(U)$.

If $A\leftrightarrow B$ is $L_{n+1}(U)$, it is in $L_n^4(U)$, in which case both $A$ and $B$ or both $\neg A$ and $\neg B$ are in $L_n(U)$.
If $\neg (A\leftrightarrow B)$ is in $L_{n+1}(U)$, it is in $L_n^8(U)$, whence both $A$ and $\neg B$ or both $\neg A$ and $B$ are in $L_n(U)$. Thus both $A\leftrightarrow B$ and  $\neg (A\leftrightarrow B)$ cannot be in $L_{n+1}(U)$, for otherwise  both $A$ and $\neg A$ or both  $B$ and $\neg B$ are in $L_n(U)$, contradicting with (h0).

The above results and the induction hypothesis (h0) imply that
for no sentence $A$ of $\mathcal L$ both $A$ and $\neg A$ belong to $L_{n+1}(U)=L_n(U)\cup \bigcup_{k=0}^8 L_n^k(U)$.\\
Since (h0) is proved when $n=0$, it is by induction valid for every $n\in\mathbb N_0$.

If $A$ and $\neg A$ are in $L(U)$, then $A$ is by \eqref{E21} in $L_{n_1}(U)$ for some $n_1\in\mathbb N_0$, and  $\neg A$ is in $L_{n_2}(U)$ for some $n_2\in\mathbb N_0$. Then both $A$ and $\neg A$ are in $L_n(U)$ when $n=\max\{n_1,n_2\}$. This is impossible, because (h0) is proved for every $n\in\mathbb N_0$. Thus $A$ and $\neg A$ cannot both be in $L(U)$ for any sentence $A$ of $\mathcal L$.

The above result and \eqref{E22} imply that there is no sentence $A$ in $\mathcal L$ such that both \#$A$ and \#[$\neg A$] are in $G(U)$.
 Thus $G(U)$ is consistent.
\end{proof}

\begin{lemma}\label{L203} Assume that $U$ and $V$ are consistent  subsets of  $\mathcal D$, and that $V\subseteq U$.
 Then $L(V)\subseteq L(U)$ and $G(V)\subseteq G(U)$.
\end{lemma}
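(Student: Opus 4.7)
The plan is to prove $L(V)\subseteq L(U)$ by showing inductively that $L_n(V)\subseteq L_n(U)$ for every $n\in\mathbb N_0$, then take the union via \eqref{E21}. The second inclusion $G(V)\subseteq G(U)$ will follow immediately from \eqref{E22}, since any sentence of $L(V)$ is a sentence of $L(U)$.

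For the base case $n=0$, I would split according to the definition \eqref{E201}. If $V=\emptyset$ then $L_0(V)=Z$, which is contained in $L_0(U)$ whether $U$ is empty or not, since the nonempty branch of \eqref{E201} also starts with $Z$. If $V\ne\emptyset$ then $U\ne\emptyset$ as well (because $V\subseteq U$), so both $L_0(V)$ and $L_0(U)$ take the nonempty form. Among the six pieces listed in \eqref{E201}, the sets $Z$, $Z_1$, $Z_2$, $Z_3$, $Z_4$ do not depend on the parameter, so they appear identically on both sides. For $Z_1(V)\subseteq Z_1(U)$ and $Z_2(V)\subseteq Z_2(U)$, the defining clauses in \eqref{E20} require respectively $\#A\in V$ and $\#[\neg A]\in V$, and in each case $V\subseteq U$ transports the membership into $U$. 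This gives $L_0(V)\subseteq L_0(U)$.

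For the inductive step, suppose $L_n(V)\subseteq L_n(U)$. A glance at \eqref{E203} shows that each $L_n^k(\,\cdot\,)$ is defined purely in terms of which sentences of the form $A$, $\neg A$, $B$, $\neg B$ lie in the set $L_n(\,\cdot\,)$; no other dependence on the parameter $U$ or $V$ is present. Consequently the operators $L_n^k$ are monotone in their underlying set, so $L_n^k(V)\subseteq L_n^k(U)$ for $k=0,\dots,8$. Combining with the inductive hypothesis and \eqref{E204} yields $L_{n+1}(V)\subseteq L_{n+1}(U)$. Taking the union over $n$ and using \eqref{E21} gives $L(V)\subseteq L(U)$, and then $G(V)\subseteq G(U)$ follows at once from the definition \eqref{E22}.

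There is no substantive obstacle here; the argument is essentially bookkeeping. The only delicate point is the base case, where one must remember that \eqref{E201} distinguishes the case $V=\emptyset$ and handle it separately so as not to claim that pieces like $Z_1(V)$, $Z_2(V)$, $Z_1$, $Z_2$, $Z_3$, $Z_4$ appear on the left when $V$ is empty. Once that case split is made, monotonicity of each component propagates straightforwardly through the recursion.
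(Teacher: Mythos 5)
Your proposal is correct and follows essentially the same route as the paper's own proof: establish $L_0(V)\subseteq L_0(U)$ by the case split on $V=\emptyset$ together with monotonicity of $Z_1(\cdot)$ and $Z_2(\cdot)$, propagate the inclusion through the recursion \eqref{E203}--\eqref{E204} by induction, and pass to the unions \eqref{E21} and \eqref{E22}. No substantive differences.
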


\begin{proof} As consistent sets $V$ and $U$ are proper subsets of $\mathcal D$. At first we show that $L_0(V)\subseteq L_0(U)$.

If $V=\emptyset$, then $L_0(V)=Z\subseteq L_0(U)$ by \eqref{E201}.

Assume next that $V$ is nonempty. Thus also $U$ is nonempty.

Let $A$ be a sentence of $L$. Definition \eqref{E201} of $L_0(U)$ implies that $A$ is in $L_0(U)$ and also in $L_0(V)$ iff $A$ is in $Z$.

Let {\bf n} be a numeral. If $T({\bf n})$ is in $L_0(V)$, it is  in $Z_1(V)$,  so that  ${\bf n}=\left\lceil A\right\rceil$, where \#$A$ is in $V$. Because $V\subseteq U$, then \#$A$ is also in $U$, whence $T({\bf n})$ is in $Z_1(U)$, and hence in $L_0(U)$.

If $\neg T({\bf n})$ is in $L_0(V)$, it is in $Z_2(V)$,  in which case  ${\bf n}=\left\lceil A\right\rceil$, where \#[$\neg A$] is in $V$. Since $V\subseteq U$, then \#[$\neg A$] is also in $U$, whence $\neg T({\bf n})$ is in $Z_2(U)$, and hence in $L_0(U)$.

Because $U$ and $V$ are nonempty and proper subsets of $\mathcal D$, then  $Z_1$, $Z_2$, $Z_3$ and $Z_4$ are in $L_0(U)$ and in $L_0(V)$ by \eqref{E201}.

The above results imply that $L_0(V)\subseteq L_0(U)$.
Make the induction hypothesis:
\begin{itemize}
\item[(h1)] \qquad $L_n(V)\subseteq L_n(U)$
\end{itemize}
 for some $n\in \mathbb N_0$. It follows from \eqref{E203} and (h1) that
$L_n^k(V)\subseteq L_n^k(U)$ for each $k=0,\dots,8$. Thus
$$
L_{n+1}(V)=L_n(V)\cup \bigcup_{k=0}^8 L_n^k(V)\subseteq L_n(U)\cup \bigcup_{k=0}^8 L_n^k(U)=L_{n+1}(U).
$$
(h1) is proved when $n=0$, whence it is by induction valid for every $n\in\mathbb N_0$.

If $A$ is in $L(V)$, it is by \eqref{E21} in $L_n(V)$ for some $n\in\mathbb N_0$. Thus   $A$ is in $L_n(U)$ by (h1), and hence in $L(U)$.
Consequently, $L(V)\subseteq L(U)$.

If \#$A$ is in $G(V)$ then $A$ is in $L(V)$ by \eqref{E22}. Thus $A$ is in $L(U)$, so that  \#$A$ is in $G(U)$ by \eqref{E22}.
This shows that $G(V)\subseteq G(U)$.
\end{proof}

Denote by $\mathcal C$ the family of consistent subsets of $\mathcal D$.
In the formulation and the proof of Theorem \ref{T2} transfinite sequences indexed by  ordinals are used. A transfinite sequence $(U_\lambda)_{\lambda<\alpha}$ of $\mathcal C$ is said to be increasing if
$U_\mu\subseteq U_\nu$ whenever $\mu<\nu<\alpha$, and strictly increasing if
$U_\mu\subset U_\nu$ whenever $\mu<\nu<\alpha$.

\begin{lemma}\label{L204} Assume that $(U_\lambda)_{\lambda<\alpha}$ is a strictly increasing sequence of $\mathcal C$.
Then\\
(a) \ $(G(U_\lambda))_{\lambda<\alpha}$  is an increasing sequence of $\mathcal C$.\\
(b) \ The union $\underset{\lambda<\alpha}{\bigcup}G(U_\lambda)$ is consistent.
\end{lemma}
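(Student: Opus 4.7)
The plan is to prove (a) and (b) as direct consequences of Lemmas \ref{L201} and \ref{L203}, together with the elementary fact that ordinals are linearly ordered.

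For part (a), first I would verify membership in $\mathcal C$: since each $U_\lambda$ is in $\mathcal C$ (hence consistent), Lemma \ref{L201} gives immediately that $G(U_\lambda)$ is consistent, i.e., $G(U_\lambda)\in\mathcal C$. To get the increasing property, fix $\mu<\nu<\alpha$. By strict increase of $(U_\lambda)$ we have $U_\mu\subset U_\nu$, in particular $U_\mu\subseteq U_\nu$. Both sets are in $\mathcal C$, so Lemma \ref{L203} applies and gives $G(U_\mu)\subseteq G(U_\nu)$. Hence $(G(U_\lambda))_{\lambda<\alpha}$ is an increasing sequence of $\mathcal C$.

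For part (b), I would argue by contradiction. Suppose the union $\bigcup_{\lambda<\alpha}G(U_\lambda)$ is not consistent. Then there is a sentence $A$ of $\mathcal L$ such that both $\#A$ and $\#[\neg A]$ belong to the union, so $\#A\in G(U_{\lambda_1})$ and $\#[\neg A]\in G(U_{\lambda_2})$ for some $\lambda_1,\lambda_2<\alpha$. Set $\lambda=\max\{\lambda_1,\lambda_2\}<\alpha$ (well-defined since ordinals are linearly ordered). By part (a) the sequence $(G(U_\lambda))_{\lambda<\alpha}$ is increasing, so both $\#A$ and $\#[\neg A]$ lie in $G(U_\lambda)$. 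But $G(U_\lambda)$ is consistent by Lemma \ref{L201}, a contradiction. Hence the union is consistent.

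There is no genuine obstacle here: the work has already been done in Lemmas \ref{L201} and \ref{L203}. The only mild subtlety is the use of $\max\{\lambda_1,\lambda_2\}$, which is simply the larger of two ordinals and therefore is itself an ordinal strictly below $\alpha$; no transfinite machinery beyond this is needed.
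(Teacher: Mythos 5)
Your proof is correct and follows essentially the same route as the paper: part (a) from Lemmas \ref{L201} and \ref{L203}, and part (b) by locating both $\#A$ and $\#[\neg A]$ in a single $G(U_\lambda)$ via the linear ordering of ordinals (the paper phrases this as ``$G(U_\mu)\subseteq G(U_\nu)$ or $G(U_\nu)\subseteq G(U_\mu)$'' rather than taking a maximum, but the idea is identical).
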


\begin{proof}
Since  $U_\mu\subset U_\nu$ when $\mu<\nu<\alpha$, it follows from  Lemma \ref{L203} that  $G(U_\mu)\subseteq G(U_\nu)$ when $\mu<\nu<\alpha$, whence the sequence  $(G(U_\lambda))_{\lambda<\alpha}$  is increasing.
Consistency of the sets $G(U_\lambda)$, $\lambda<\alpha$, follows from Lemma \ref{L201} because the sets  $U_\lambda$, $\lambda<\alpha$, are consistent.  This proves (a).
\smallskip

To prove that the union $\underset{\lambda<\alpha}{\bigcup}G(U_\lambda)$  is consistent, assume on the contrary that there exists such a sentence $A$ in $\mathcal L$ that  both \#$A$ and \#[$\neg A$] are in $\underset{\lambda<\alpha}{\bigcup}G(U_\lambda)$.
Thus there exist $\mu,\,\nu<\alpha$ such that \#$A$ is in $G(U_\mu)$ and \#[$\neg A$] is in $G(U_\nu)$. Because
$G(U_\mu)\subseteq G(U_\nu)$ or $G(U_\nu)\subseteq G(U_\mu)$, then both  \#$A$ and \#[$\neg A$] are in $G(U_\mu)$ or in $G(U_\nu)$.
But this is impossible, since both $G(U_\mu)$ and $G(U_\nu)$ are consistent. Thus, the set $\underset{\lambda<\alpha}{\bigcup}G(U_\lambda)$  is consistent.
\end{proof}

Now we are ready to prove the following Theorem.

\begin{theorem}\label{T2} Let $W$ denote the set of G\"odel numbers of true sentences of $L$.
 We say that a transfinite sequence
$(U_\lambda)_{\lambda<\alpha}$ of $\mathcal C$ is a $G$-sequence if it has the following properties.
\begin{itemize}
\item[(G)] \ $(U_\lambda)_{\lambda<\alpha}$ is strictly increasing,
$U_0=W$, and if $0<\mu< \alpha$, then
$U_\mu = \underset{\lambda<\mu}{\bigcup}G(U_\lambda)$.
\end{itemize}
Then the longest $G$-sequence exists, and it has the last member. This member is the smallest consistent subset $U$ of $\mathcal D$  satisfying $U=G(U)$.
\end{theorem}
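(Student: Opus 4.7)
The plan is to build the $G$-sequence by transfinite recursion, argue it stabilizes at some countable ordinal via a cardinality argument, and identify the stabilization point with the least consistent fixed point of $G$. First, $W \in \mathcal{C}$: since $L$ is bivalent and classical, exactly one of $A,\neg A$ is true for every sentence $A$ of $L$, so $W$ contains no pair $\#A,\#[\neg A]$. I set $U_0 := W$ and, for every ordinal $\mu > 0$, define $U_\mu := \bigcup_{\lambda<\mu} G(U_\lambda)$; Lemma \ref{L203} gives monotonicity of $G$, so whenever the partial sequence is weakly increasing, the union collapses at a successor to $U_{\lambda+1} = G(U_\lambda)$. Consistency is then preserved at every stage: by Lemma \ref{L201} at successors and by Lemma \ref{L204}(b) at limits (the latter using the strict monotonicity established below).

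The key auxiliary fact $U_\lambda \subseteq G(U_\lambda)$ is proved by transfinite induction. The base case uses that every true sentence of $L$ is in $Z \subseteq L_0(W) \subseteq L(W)$, so $W \subseteq G(W)$. At a successor, Lemma \ref{L203} applied to the inductive hypothesis gives $U_{\lambda+1} = G(U_\lambda) \subseteq G(G(U_\lambda)) = G(U_{\lambda+1})$; at a limit $\mu$, the same lemma yields $G(U_\mu) \supseteq G(U_\lambda) \supseteq U_\lambda$ for each $\lambda<\mu$, hence $G(U_\mu) \supseteq U_\mu$. This inclusion forces a dichotomy at each successor stage: either $U_\lambda \subsetneq U_{\lambda+1} = G(U_\lambda)$ strictly, or $G(U_\lambda) = U_\lambda$. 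At any limit $\mu$, strict extension $U_\mu \supsetneq U_\nu$ for all $\nu<\mu$ is automatic from $U_\nu \subsetneq U_{\nu+1} \subseteq U_\mu$, so the construction can only halt at a successor, namely via a fixed point $G(U_\beta) = U_\beta$.

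The main obstacle is producing such a $\beta$; this is a cardinality argument. Because $\mathcal{L}$ is a countable union of countable languages, $\mathcal{D}$ is countable, and so admits no strictly increasing transfinite chain of subsets of length $\omega_1$: picking a distinct witness $x_\lambda \in U_{\lambda+1} \setminus U_\lambda$ at each successor index would inject $\omega_1$ into $\mathcal{D}$ (the $x_\lambda$ are pairwise distinct by monotonicity). Hence there is a least countable $\beta$ with $U_{\beta+1} = U_\beta$, and by the collapse $G(U_\beta) = U_\beta$. The sequence $(U_\lambda)_{\lambda<\beta+1}$ is then a $G$-sequence with last member $U := U_\beta$, and it is the longest because transfinite recursion governed by $U_0 = W$ and clause (G) determines every $G$-sequence uniquely as a prefix of this one.

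For minimality, let $V \in \mathcal{C}$ satisfy $V = G(V)$. Transfinite induction on $\lambda$ shows $U_\lambda \subseteq V$: for the base, every true sentence of $L$ lies in $Z \subseteq L_0(V) \subseteq L(V)$, so its G\"odel number is in $G(V) = V$, giving $W \subseteq V$; the successor case is $U_{\lambda+1} = G(U_\lambda) \subseteq G(V) = V$ by Lemma \ref{L203}; the limit case is immediate. Therefore $U = U_\beta \subseteq V$, establishing $U$ as the smallest consistent subset of $\mathcal{D}$ with $U = G(U)$.
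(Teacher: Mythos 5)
Your proof is correct and follows essentially the same route as the paper: consistency of $W$, monotonicity of $G$ via Lemma \ref{L203}, preservation of consistency via Lemmas \ref{L201} and \ref{L204}, a countability-of-$\mathcal D$ argument to force stabilization below the first uncountable ordinal, and transfinite induction for minimality. The only difference is presentational --- you run one global transfinite recursion and stop at the first fixed point, whereas the paper takes the supremum of the lengths of all $G$-sequences and shows it is a successor; your explicit auxiliary step $U_\lambda\subseteq G(U_\lambda)$ and your choice of witnesses $x_\lambda\in U_{\lambda+1}\setminus U_\lambda$ are, if anything, slightly cleaner than the paper's corresponding passages.
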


\begin{proof} $W$ is consistent, since $L$ has properties (i)-(iii).
At first we show that $G$-sequences are nested:
\noindent
(1) \ Assume that $(U_\lambda)_{\lambda<\alpha}$ and $(V_\lambda)_{\lambda<\beta}$ are $G$-sequences. Then $U_\lambda=V_\lambda$ when $\lambda <\min\{\alpha,\beta\}$.

 $U_0=W=V_0$ by (G). Make the induction hypothesis:\\
\smallskip
(h) \ There exists an ordinal $\nu$ which satisfies $0<\nu< \min\{\alpha,\beta\}$ such that $U_\lambda=V_\lambda$ for each $\lambda<\nu$.

It follows from (h) and (G) that $U_\nu = \underset{\lambda<\nu}{\bigcup}G(U_\lambda)=\underset{\lambda<\nu}{\bigcup}G(V_\lambda)=V_\nu$.
Since $U_0=V_0$, then (h) holds when $\nu=1$. These results imply (1) by transfinite induction.

Let $(U_\lambda)_{\lambda<\alpha}$ be a $G$-sequence. Defining  $f(0)=\min U_0$,  $f(\lambda)=\min(U_\lambda\setminus U_{\lambda-1})$, $0 < \lambda < \alpha$, and $f(\alpha)=\min(\mathcal D\setminus\underset{\lambda<\alpha}{\bigcup}U_\lambda)$, we obtain a  bijection $f$ from $[0,\alpha]$ to a subset of $\mathbb N_0$.
Thus $\alpha$ is a countable ordinal. Consequently,
 the set $\Gamma$ of those ordinals $\alpha$ for which $(U_\lambda)_{\lambda<\alpha}$ is a $G$-sequence is bounded from above by the smallest uncountable ordinal. Denote by $\gamma$ the least upper bound of $\Gamma$.

To show that $\gamma$ is a successor, assume on the contrary that $\gamma$ is a limit ordinal. Given any $\mu<\gamma$, then $\nu=\mu+1$ and $\alpha=\nu+1$ are $< \gamma$. $(U_\lambda)_{\lambda<\alpha}$ is a $G$-sequence, whence $U_\mu=\underset{\lambda<\mu}{\bigcup}G(U_\lambda)$, and  $U_\mu\subset U_{\mu+1}$. Thus
$(U_\lambda)_{\lambda<\gamma}$ has properties (G) when $\alpha=\gamma$,  so that $(U_\lambda)_{\lambda<\gamma}$ is a $G$-sequence.
  Denote $U_\gamma = \underset{\lambda<\gamma}{\bigcup}G(U_\lambda)$.  $U_\gamma$ is consistent by Lemma \ref{L204}(b). Because
 $U_\mu\subset U_\nu=\underset{\lambda<\nu}{\bigcup}G(U_\lambda)\subseteq U_\gamma$ for each $\mu<\gamma$, then $(U_\lambda)_{\lambda<\gamma+1}$ is a $G$-sequence. But this contradicts with the choice of $\gamma$.

Thus $\gamma$ is a successor, say $\gamma=\alpha+1$. If $\lambda < \alpha$, then $U_\lambda \subset U_\alpha$, so that $G(U_\lambda)\subseteq G(U_\alpha)$. Then
$U_\alpha=\underset{\lambda<\alpha}{\bigcup}G(U_\lambda)\subseteq \underset{\lambda<\gamma}{\bigcup}G(U_\lambda)=G(U_\alpha)$,
whence $U_\alpha\subseteq G(U_\alpha)$. Moreover, $U_\alpha = G(U_\alpha)$,\\
 for otherwise
$U_\alpha\subset G(U_\alpha)= \underset{\lambda<\gamma}{\bigcup}G(U_\lambda) =U_\gamma$, and  $(U_\lambda)_{\lambda<\gamma+1}$ would be a $G$-sequence.\\
Consequently,  $(U_\lambda)_{\lambda<\gamma}$ is the longest $G$-sequence, $U_\alpha$ is its last member, and $U_\alpha=G(U_\alpha)$.

Let $U$ be a consistent subset of $\mathcal D$ satisfying $U=G(U)$. Then $U_0=W=G(\emptyset)\subseteq G(U)=U$.
Make the induction hypothesis:\\
\smallskip
(h2) \ There exists an ordinal $\mu$ which satisfies $0<\mu< \gamma$ such that $U_\lambda\subseteq U$ for each $\lambda<\mu$.

Then $G(U_\lambda)\subseteq G(U)$ for each $\lambda<\mu$, whence $U_\mu=\underset{\lambda<\mu}{\bigcup}G(U_\lambda)\subseteq G(U)=U$. Thus, by transfinite induction, $U_\mu\subseteq U$ for each $\mu<\gamma$. In particular, $U_\alpha\subseteq U$. This proves the last assertion of Theorem.
\end{proof}

\section{Language $\mathcal L_T$ and its properties}\label{S3}

Let $L_0$ be a language which conforms to classical logic and has not a truth predicate. Let $\mathcal L$, $\mathcal P$ and $\mathcal D$  be as in Section \ref{S2}, and let $L(U)$ and $G(U)$, $U\subset \mathcal D$,  be defined by \eqref{E21} and \eqref{E22}. Define
\begin{equation}\label{E27}
 F(U)=\{A: \neg A\in L(U)\}.
\end{equation}
Recall that a subset $U$  of $\mathcal D$ is consistent if there is no sentence $A$ in $\mathcal L$ such that both \#$A$ and \#[$\neg A$] are in $U$.
By Theorem \ref{T2} the smallest consistent subset of $\mathcal D$ which satisfies $U=G(U)$ exists.

\begin{definition}\label{D1} Let $U$ be the smallest consistent subset of $\mathcal D$ which satisfies $U=G(U)$.
 Denote by $\mathcal L_T$ the language formed by the object language $L_0$, the predicates of $\mathcal P$, the sentences of $L(U)$ and $F(U)$, formulas $T(x)$ and $\neg T(x)$, corresponding  predicates $T$ and $\neg T$ with their domain
$D_T$ and the set $N_T$ of terms defined by
\begin{equation}\label{E28}
D_T=L(U)\cup F(U)  \hbox{ and } \
N_T=\{{\bf n}: {\bf n}=\left\lceil A\right\rceil,  \hbox{ where $A$ is in }  D_T\},
\end{equation}
and  predicates   $T_1(\left\lceil T_2(\dot x)\right\rceil)$, and $T_1(\left\lceil P(\dot x_1,\dots,\dot x_m)\right\rceil)$, where $T_1,T_2\in \cup\{T,\neg T\}$ and $P\in\mathcal P$ with arity $m\ge 1$.
\end{definition}

A valuation is defined for sentences of $\mathcal L_T$ as follows.
\begin{enumerate}
 \item[(I)] A sentence of $\mathcal L_T$ is valuated as true iff it is in $L(U)$, and as false
iff it is in $F(U)$.
\end{enumerate}

\begin{lemma}\label{L33} The language $\mathcal L_T$ defined by Definition \ref{D1} and valuated by (I) is bivalent.
\end{lemma}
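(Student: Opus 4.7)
The plan is to verify bivalence directly by checking the two defining conditions: every sentence of $\mathcal{L}_T$ receives at least one of the values true/false under the valuation rule (I), and no sentence receives both. Since (I) identifies \emph{true} with membership in $L(U)$ and \emph{false} with membership in $F(U)$, this splits into two claims:
(a) every sentence of $\mathcal{L}_T$ lies in $L(U) \cup F(U)$;
(b) $L(U) \cap F(U) = \emptyset$.

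First I would dispose of (b), which is essentially immediate from the work already done. By Theorem \ref{T2} the set $U$ is consistent, so Lemma \ref{L201} gives that no sentence $A$ of $\mathcal L$ has both $A$ and $\neg A$ in $L(U)$. Combined with the defining equation \eqref{E27}, namely $F(U) = \{A : \neg A \in L(U)\}$, this forces $L(U) \cap F(U) = \emptyset$: if $A$ were in both, then $A$ and $\neg A$ would both lie in $L(U)$, a contradiction.

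Next, for (a), I would read off from Definition \ref{D1} that the sentences of $\mathcal{L}_T$ are precisely those in $D_T = L(U) \cup F(U)$, together with any atomic sentences obtained by substituting terms of $N_T$ into the predicates $T$, $\neg T$, $T_1(\left\lceil T_2(\dot x)\right\rceil)$ and $T_1(\left\lceil P(\dot x_1,\dots,\dot x_m)\right\rceil)$. For these latter sentences I would exploit the fixed-point property $U = G(U)$: since $U = G(U) = \{\#A : A \in L(U)\}$, we have $\#A \in U$ iff $A \in L(U)$. Hence by \eqref{E20}, when $\mathbf{n} = \left\lceil A\right\rceil \in N_T$ and $A \in L(U)$, $T(\mathbf{n}) \in Z_1(U) \subseteq L_0(U) \subseteq L(U)$; while if $A \in F(U)$, i.e.\ $\neg A \in L(U)$, then $\#[\neg A] \in U$ and $\neg T(\mathbf{n}) \in Z_2(U) \subseteq L(U)$, placing $T(\mathbf{n}) \in F(U)$. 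A parallel argument using the layers $Z_3, Z_4$ of \eqref{E50} and the bivalence of $L$ (property (iii) of $L$) handles the quantified sentences $q_1 x_1 \dots q_m x_m T(\left\lceil P(\dot x_1, \dots, \dot x_m)\right\rceil)$ and their $\neg T$ counterparts.

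Combining (a) and (b) with rule (I), every sentence of $\mathcal{L}_T$ is valuated as exactly one of true or false, so $\mathcal{L}_T$ is bivalent. The main obstacle is the bookkeeping in (a): one must carefully trace each kind of atomic sentence admitted by Definition \ref{D1} back to the appropriate $Z_i(U)$ or $Z_i$ layer of $L_0(U)$, checking that nothing escapes $L(U) \cup F(U)$. Step (b) is a near-immediate corollary of Lemma \ref{L201} and the consistency of $U$ provided by Theorem \ref{T2}.
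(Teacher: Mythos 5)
Your proposal is correct and follows essentially the same route as the paper: establish $L(U)\cap F(U)=\emptyset$ via the consistency of $U$ (Theorem \ref{T2}), Lemma \ref{L201} and \eqref{E27}, and then observe that by Definition \ref{D1} every sentence of $\mathcal L_T$ lies in $L(U)\cup F(U)$, so that (I) assigns exactly one truth value. The only difference is that you spell out part (a) in more detail (tracing the atomic $T$-sentences back to $Z_1(U)$, $Z_2(U)$, $Z_3$, $Z_4$ via the fixed point $U=G(U)$), where the paper simply takes $D_T=L(U)\cup F(U)$ as read from the definition.
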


\begin{proof} The subsets $L(U)$ and $F(U)$ of the sentences of $\mathcal L_T$ are disjoint. For otherwise there is a sentence $A$ of $\mathcal L_T$ which is in  $L(U)\cap F(U)$. Then $A$ is in $L(U)$, and by the definition \eqref{E27} of $F(U)$ also $\neg A$ is in $L(U)$. But this is impossible by Lemma \ref{L201}.  Consequently, $L(U)\cap F(U)=\emptyset$.
\smallskip

If $A$ is a sentence of $\mathcal L_T$, then it is in $L(U)$ or in $F(U)$.
If $A$ is true, it is in $L(U)$, but not in $F(U)$, and hence not false, because  $L(U)\cap F(U)=\emptyset$.
Similarly, if $A$ is false, it is in $F(U)$, but not in $L(U)$, and hence not true.
Consequently, $A$ is either true or false, so that $\mathcal L_T$ is bivalent.
\end{proof}

\begin{lemma}\label{L31}  Let  $\mathcal L_T$ be defined by Definition \ref{D1} and valuated by (I). Then a sentence of the basic extension $L$ of $L_0$ is true (respectively false) in the valuation (I) iff it is true (respectively false) in the valuation of $L$.
\end{lemma}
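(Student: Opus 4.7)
The plan is to use three ingredients: the inclusion $Z\subseteq L(U)$ that holds regardless of whether $U$ is empty, the definition $F(U)=\{A:\neg A\in L(U)\}$, and the disjointness $L(U)\cap F(U)=\emptyset$ already established in the proof of Lemma~\ref{L33} via Lemma~\ref{L201}. I would prove the four implications (true vs.\ false) $\times$ (forward vs.\ converse) separately, each in two or three lines.

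For the forward directions, first I would observe that both branches of the definition of $L_0(U)$ contain the set $Z$ of true sentences of $L$, so $Z\subseteq L_0(U)\subseteq L(U)$. Hence if a sentence $A$ of $L$ is true in the valuation of $L$, then $A\in Z\subseteq L(U)$, so $A$ is true under~(I). If instead $A$ is false in $L$, then property~(ii) of $L$ gives that $\neg A$ is true in $L$, so $\neg A\in Z\subseteq L(U)$, and by the definition of $F(U)$ we obtain $A\in F(U)$, i.e.\ $A$ is false under~(I).

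For the converses, I would appeal to the bivalence of $L$ (property~(iii)). If $A$ is a sentence of $L$ that is true under~(I), then $A\in L(U)$; if $A$ were false in $L$, the forward direction just proved would put $A$ in $F(U)$, contradicting $L(U)\cap F(U)=\emptyset$. Hence $A$ is true in $L$. The analogous argument, with the roles of $L(U)$ and $F(U)$ swapped, handles ``false under~(I) $\Rightarrow$ false in $L$''.

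There is no real obstacle; the entire argument is bookkeeping on the definitions. The only point worth double-checking is that $Z\subseteq L_0(U)$ in both branches of the defining equation for $L_0(U)$, which is immediate from inspection, and that one may legitimately apply Lemma~\ref{L201} here because $U$ is consistent by its choice in Definition~\ref{D1}.
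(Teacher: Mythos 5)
Your proof is correct, and on the one delicate point it actually takes a different (and tighter) route than the paper. The paper's proof is a single chain of equivalences whose pivotal link is the assertion that, for a sentence $A$ of $L$, ``$A$ is in $L(U)$ iff $A$ is in $Z$,'' justified only by the phrase ``by the construction of $L(U)$''; taken literally, the ``only if'' half of that link would require an induction over the stages $L_n(U)$ showing that no sentence of $L$ outside $Z$ is ever generated by the closure clauses \eqref{E203}. You sidestep that entirely: for the forward implications you use only $Z\subseteq L_0(U)\subseteq L(U)$ and the definition \eqref{E27} of $F(U)$, and for the converses you combine bivalence of $L$ (property (iii)) with the disjointness $L(U)\cap F(U)=\emptyset$ from Lemma~\ref{L33}, so that a sentence of $L$ lying in $L(U)$ cannot be false in $L$ without landing in $F(U)$ as well. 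This buys you a proof whose only nontrivial external input is Lemma~\ref{L201} (via Lemma~\ref{L33}), which is indeed available since the $U$ of Definition~\ref{D1} is consistent; the paper's version is shorter on the page but leaves the key converse step implicit. Both arguments establish the same four implications, so your decomposition is a legitimate and arguably more careful substitute.
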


\begin{proof} Let $A$ denote a sentence of $L$. $A$ is true in the valuation (I) iff $A$ is in $L(U)$ iff (by the construction of $L(U)$) $A$ is in $Z$ iff $A$ is true in the valuation of $L$. $A$ is false in the valuation (I) iff $A$ is in $F(U)$ iff
(by \eqref{E27}) $\neg A$ is in $L(U)$ iff ($\neg A$ is a sentence of $L$) $\neg A$ is in $Z$
iff $\neg A$ is true in the valuation of $L$ iff ($L$ has properties (i)--(iii)) $A$ is false in the valuation of $L$.
\end{proof}

\begin{lemma}\label{L32} The language $\mathcal L_T$ defined by Definition \ref{D1} and valuated by (I) has properties (i) and (ii) given in Introduction.
\end{lemma}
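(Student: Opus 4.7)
The plan is to verify the clauses of (i) and (ii) one at a time, appealing to Definition \ref{D1}, the inductive construction of $L(U)$ through the sets $L_n^k(U)$, the definition \eqref{E27} of $F(U)$, Lemma \ref{L33} (which gives $L(U)\cap F(U)=\emptyset$, so valuation (I) is well-defined and bivalent), Lemma \ref{L31} (truth of a sentence of $L$ agrees under (I) and in the original valuation of $L$), and the fact that $L$ itself satisfies (i)--(iii). The logical symbols and all predicates demanded by (i) are either inherited from $L$ or explicitly listed in Definition \ref{D1}, so the substance of (i) is closure of the sentence set $L(U)\cup F(U)$ under the connectives and under quantifier formation, while the substance of (ii) is that the clauses defining $L_n^k(U)$ and $F(U)$ encode precisely the classical truth rules.

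For closure under the connectives I would argue by cases on which of $L(U)$ or $F(U)\setminus L(U)$ the components lie in. Negation: if $A\in L(U)$, pick $n$ with $A\in L_n(U)$; then $\neg(\neg A)\in L_n^0(U)$, so $\neg A\in F(U)$. If $A\in F(U)\setminus L(U)$, then $\neg A\in L(U)$ by \eqref{E27}. For a binary connective $\circ\in\{\vee,\wedge,\rightarrow,\leftrightarrow\}$, the four-way case split on the locations of $A$ and $B$ places $A\circ B$ either into the positive clause $L_n^k(U)$ matching the true-line of the classical truth table (so $A\circ B\in L(U)$) or into the corresponding negation clause $L_n^{k+4}(U)$ matching a false-line (so $A\circ B\in F(U)$). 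Quantified sentences built from a predicate of $\mathcal P$ are placed into $Z\subseteq L(U)$ or into $F(U)$ via bivalence of $L$; for the $T$-predicates and the composite $T(\left\lceil P(\dot x_1,\dots,\dot x_m)\right\rceil)$-predicates, membership of the relevant sentences in $Z_1\cup Z_2\cup Z_3\cup Z_4\subseteq L_0(U)$, together with $L_n^0(U)$, places every quantified form in $L(U)\cup F(U)$.

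Property (ii) is then read off the same definitions: the negation rule from $L_n^0(U)$ and \eqref{E27}; the rules for $\vee,\wedge,\rightarrow,\leftrightarrow$ from the dictionary between the clauses $L_n^k(U)$ and the truth tables (using that each component of a compound sentence of $\mathcal L_T$ already has a determinate truth value, by Lemma \ref{L33}); the quantifier rules for predicates of $L$ from Lemma \ref{L31} combined with (ii) for $L$; and the quantifier rules for the $T$-predicates from the basic equivalence $T(\left\lceil A\right\rceil)$ is true iff $\#A\in U=G(U)$ iff $A\in L(U)$ iff $A$ is true (dually for $\neg T$ via $Z_2(U)$), together with the defining conditions of $Z_3^m$ and $Z_4^m$.

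The step I expect to require the most care is the quantifier rule for the composite $T(\left\lceil P(\dot x_1,\dots,\dot x_m)\right\rceil)$-predicates: for each pattern $(q_1,\dots,q_m)$ of quantifiers I must match the truth value of the quantified sentence under (I) with the truth values of its instances $T(\left\lceil P(b_1,\dots,b_m)\right\rceil)$, routing the translation through $Z_3^m$, $Z_4^m$, Lemma \ref{L31}, property (ii) of $L$, and the $T$-equivalence above, and similarly through $Z_2$ for the $T_1(\left\lceil T_2(\dot x)\right\rceil)$ forms. No genuine obstacle is anticipated, because the sets $L_n^k(U)$, $Z_j$, and $Z_j(U)$ are engineered precisely to make the classical rules hold; the only real challenge is keeping the numerous cases organized.
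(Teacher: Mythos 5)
Your proposal is correct and follows essentially the same route as the paper's proof: property (i) is read off the construction of $L(U)$ and $F(U)$, and property (ii) is verified rule by rule using the clauses $L_n^k(U)$, the key equivalence ``$A$ is true iff $\#A\in U=G(U)$ iff $T(\left\lceil A\right\rceil)\in Z_1(U)\subset L(U)$'', Lemma \ref{L31} for sentences of $L$, the sets $Z_1,\dots,Z_4$ for the quantified $T$-sentences, and bivalence (Lemma \ref{L33}) to convert truth conditions into falsity conditions. The only cosmetic difference is that you obtain the converse directions of the compositional biconditionals from bivalence of the components, whereas the paper also argues them directly via a smallest-$n$ membership analysis; both work.
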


\begin{proof} Unless otherwise stated, 'true' means true in the valuation (I), and 'false' means false in the valuation (I).

 The construction of $L(U)$ and the definition \eqref{E27} of $F(U)$ imply that $\mathcal L_T$ has properties (i).

As for properties (ii) we at first derive the following auxiliary rule.
\begin{itemize}
\item[(t0)] \ Double negation: If $A$ is a sentence of $\mathcal L_T$, then $\neg(\neg A)$ is true iff $A$ is true.
\end{itemize}
To prove (t0), assume first that $\neg(\neg A)$ is true.  Then it is in $L(U)$, and hence, by \eqref{E21}, in  $L_n(U)$ for some $n\in\mathbb N_0$. If $\neg(\neg A)$ is in $L_0(U)$ then it by \eqref{E201} in $Z$. Thus $\neg(\neg A)$ is true in the valuation of $L$. Then (negation rule is valid in $L$) $\neg A$ is false in the valuation of $L$, which implies that  $A$ is true in the valuation of $L$. Thus $A$ is by \eqref{E201} in $Z\subset L_0(U)\subset L(U)$, whence $A$ is true.\\
Assume next that  $n\in\mathbb N_0$ is the smallest number for which  $\neg(\neg A)$ is in $L_{n+1}(U)$. It then follows from \eqref{E203} and \eqref{E204} that $\neg(\neg A)$ is in  $L_{n}^0(U)$, so that $A$ is in $L_{n}(U)$, and hence in $L(U)$, i.e., $A$ is true.\\
Thus $A$ is true if $\neg(\neg A)$ is true.

Conversely, assume that  $A$ is true. Then $A$ is in $L(U)$, so that $A$  is in $L_n(U)$ for some $n\in\mathbb N_0$. Thus  $\neg(\neg A)$ is in  $L_n^0(U)$, and hence
in $L_{n+1}(U)$. Consequently, $\neg(\neg A)$ is in $L(U)$, whence $\neg(\neg A)$ is true. This concludes the proof of (t0).

Rule (t0) is applied to prove
\begin{itemize}
\item[(t1)] \ Negation: $A$ is true iff $\neg A$ is false, and $A$ is false iff $\neg A$ is true.
\end{itemize}
Let $A$ be a sentence of $\mathcal L_T$.
Then  $A$ is true iff (by (t0)) $\neg(\neg A)$ is true iff $\neg(\neg A)$ is in $L(U)$ iff (by \eqref{E27})
$\neg A$ is in $F(U)$ iff $\neg A$ is false.\\
$A$ is false iff $A$ is in $F(U)$ iff (by \eqref{E27}) $\neg A$ is in $L(U)$ iff $\neg A$ is true.
Thus (t1) is satisfied.

Next we  prove the following rule.
\begin{itemize}
\item[(t2)] \ Conjunction: $A\wedge B$ is true iff $A$ and $B$ are true. $A\wedge B$ is false iff $A$ or $B$ is false.
\end{itemize}
Let $A$ and $B$ be sentences of $\mathcal L_T$. If $A$ and $B$ are true, i.e.,  $A$ and $B$ are in $L(U)$, there is by \eqref{E21} an $n\in\mathbb N_0$ such that $A$ and $B$ are in $L_n(U)$. Thus  $A\wedge B$ is by \eqref{E203} in $L_n^2(U)$, and hence in $L(U)$, so that $A\wedge B$ is true.

Conversely, assume that $A\wedge B$ is true, or equivalently, $A\wedge B$ is in $L(U)$. Then there is by \eqref{E21} an $n\in\mathbb N_0$ such that $A\wedge B$ is in $L_n(U)$.
If $A\wedge B$ is in $L_0(U)$,  it is in $Z$.
Thus $A\wedge B$ is true in the valuation of $L$. Because $L$ has property (ii), then $A$ and $B$ are true in the valuation of $L$, and hence also in the valuation (I) by Lemma \ref{L31}.\\
 Assume next that $n\in\mathbb N_0$ is the smallest number for which  $A\wedge B$ is in $L_{n+1}(U)$. Then $A\wedge B$ is by \eqref{E203} in $L_{n}^2(U)$, so that $A$ and $B$ are in $L_{n}(U)$, and hence in
$L(U)$, i.e., $A$ and $B$ are true.

The above reasoning proves that $A\wedge B$ is true iff $A$ and $B$ are true. This result and the bivalence of $\mathcal L_T$, proved in Lemma \ref{L33}, imply that $A\wedge B$ is false iff $A$ or $B$ is false.
Consequently, rule (t2) is valid.
\smallskip

The proofs of the following rules are similar to the above proof of (t2).
\begin{itemize}
\item[(t3)] \ Disjunction: $A\vee B$  is true iff  $A$ or $B$ is true. $A\vee B$ false iff $A$ and $B$ are false.
\item[(t4)] \ Conditional: $A\rightarrow B$ is true iff $A$ is false or $B$ is true. $A\rightarrow B$ is  false iff
$A$ is true and $B$ is false.
\item[(t5)] \ Biconditional:
$A \leftrightarrow B$ is true iff $A$ and $B$ are both true or both false. $A \leftrightarrow B$ is  false iff  $A$ is true and
$B$ is false or $A$ is false and $B$ is true.
\end{itemize}
Next we  show that if $T_1\in \{T,\neg T\}$ then $\exists xT_1(x)$ and $\forall xT_1(x)$ have the following properties.
\begin{itemize}
\item[(t6)] \ \ $\exists xT_1(x)$ is true iff $T_1(\bf n)$ is true for some ${\bf n}\in N_T$, and false iff  $T_1(\bf n)$ is false for every ${\bf n} \in N_T$.
\item[(t7)] \ \   $\forall xT_1(x)$ is true iff $T_1(\bf n)$ is true for every  ${\bf n}\in N_T$, and false iff $T_1(\bf n)$ is false for some ${\bf n}\in N_T$.
\end{itemize}

To simplify proofs we  derive results which imply that $T$ is a truth predicate and $\neg T$ is a non-truth predicate for $\mathcal L_T$.\\
Let  $A$ denote a sentence of $\mathcal L_T$.
The valuation (I), rule (t1), the definitions of $Z_1(U)$, $Z_2(U)$ and $G(U)$, and the assumption  $U=G(U)$ imply that
 $A$ is  true iff $A$ is in $L(U)$ iff \#$A$ is in $G(U)=U$ iff $T(\left\lceil A\right\rceil)$ is in $Z_1(U)\subset L(U)$  iff $T(\left\lceil A\right\rceil)$ is true iff  $\neg T(\left\lceil A\right\rceil)$ is false.

 $A$ is false iff $A$ is in $F(U)$ iff $\neg A$ is in $L(U)$ iff \#[$\neg A$] is in $G(U)=U$ iff $\neg T(\left\lceil A\right\rceil)$ is in $Z_2(U)\subset L(U)$   iff $\neg T(\left\lceil A\right\rceil)$ is true iff $T(\left\lceil A\right\rceil)$ is false.
\newline
The above results imply that the following results are valid for every sentence $A\in\mathcal L_T$.
\begin{itemize}
\item[(T)] \ \ $A$ is true iff  $T(\left\lceil A\right\rceil)$ is true iff $\neg T(\left\lceil A\right\rceil)$ is false.
$A$ is false iff $T(\left\lceil A\right\rceil)$ is false iff $\neg T(\left\lceil A\right\rceil)$ is true.
\end{itemize}

Consider the validity of (t6) and (t7) when $T_1$ is $T$.
Because $U$ is nonempty, then $\exists xT(x)$ is in $L_0(U)$ by \eqref{E20} and \eqref{E201}, and hence in $L(U)$ by \eqref{E21}. Thus $\exists xT(x)$  is by (I) a true sentence of $\mathcal L_T$. \\
$T(\left\lceil A\right\rceil)$ is true iff (by (T)) $A$ is true iff (by (I)) $A$ is in $L(U)$. Thus
$T({\bf n})$ is true for some ${\bf n}\in N_T$.
\smallskip

The above results imply that $\exists xT(x)$ is true iff $T({\bf n})$ is true for some ${\bf n}\in N_T$. In view of this result and the bivalence of $\mathcal L_T$, one can infer that $\exists xT(x)$ is false iff  $T({\bf n})$ is false for every ${\bf n}\in N_T$. This concludes the proof of (p6) when $T_1$ is $T$.
\smallskip

$\neg\forall xT(x)$ is in $Z_1\subset L_0(U)$, and hence in $L(U)$, so that it is true. Thus $\forall xT(x)$ is false by (t1). \\
$T(\left\lceil A\right\rceil)$ is false iff (by (T)) $A$ is false iff (by (I)) $A$ is in $F(U)$.
 Thus  $T({\bf n})$ is false for some ${\bf n}\in N_T$.\\
Consequently,  $\forall xT(x)$ is  false iff $T({\bf n})$ is false for some ${\bf n}\in N_T$. This result and the bivalence of $\mathcal L_T$ imply
that $\forall xT(x)$ is true iff $T({\bf n})$ is true for every ${\bf n}\in N_T$. This proves (t7)  when $T_1$ is $T$.

To show that (t6) is valid when $T_1$ is $\neg T$, notice first that  $\exists x\neg T(x)$ is in $Z_1\subset L_0(U)$, and hence in $L(U)$, whence it is true.\\
$\neg T(\left\lceil A\right\rceil)$ is true iff (by (t1)) $T(\left\lceil A\right\rceil)$ is false iff (by (T)) $A$ is false iff (by (I)) $A$ is in $F(U)$.
 Thus  $\neg T({\bf n})$ is true for some  ${\bf n}\in N_T$.
Consequently, $\exists x\neg T(x)$ is true iff $\neg T({\bf n})$ is true for some  ${\bf n}\in N_T$. This result and the bivalence of $\mathcal L_T$ imply that $\exists x\neg T(x)$ is  false iff $\neg T({\bf n})$ is false for every ${\bf n}\in N_T$. This concludes the proof of (t6) when $T_1$ is $\neg T$.
\smallskip

Next we  prove (t7) when $T_1$ is $\neg T$.
$\neg\forall x\neg T(x)$ is in $Z_1\subset L_0(U)$, and hence in $L(U)$, so that it is true. Thus $\forall x\neg T(x)$ is false by (t1). \\
$\neg T(\left\lceil A\right\rceil)$ is false iff (by (t1)) $T(\left\lceil A\right\rceil)$ is true iff (by (T)) $A$ is true iff (by (I)) $A$ is in $L(U)$.
Thus  $\neg T({\bf n})$ is false for some ${\bf n}\in N_T$.
From these results it follows that $\forall x\neg T(x)$ is false iff $\neg T({\bf n})$ is false for some ${\bf n}\in N_T$. This result and bivalence of $\mathcal L_T$ imply that $\forall x\neg T(x)$ is true iff $\neg T({\bf n})$ is true for all ${\bf n}\in N_T$. Thus (t7) is valid when $T_1$ is $\neg T$.

Next we show that the following rules are valid when $T_1,T_2\in \{T,\neg T\}$.
\begin{itemize}
\item[(tt6)] \ \ \ \     $\exists xT_1(\left\lceil T_2(\dot x)\right\rceil)$ is true iff $T_1(\left\lceil T_2(\bf n)\right\rceil)$ is true for some ${\bf n}\in N_T$.
\item[]\quad $\exists xT_1(\left\lceil T_2(\dot x)\right\rceil)$ is false iff  $T_1(\left\lceil T_2(\bf n)\right\rceil)$ is false for every ${\bf n}\in N_T$;
\item[(tt7)] \ \ \ \      $\forall xT_1(\left\lceil T_2(\dot x)\right\rceil)$ is true iff $T_1(\left\lceil T_2(\bf n)\right\rceil)$ is true for every  ${\bf n}\in N_T$.
\item[]\quad $\forall xT_1(\left\lceil T_2(\dot x)\right\rceil)$ is false iff
$T_1(\left\lceil T_2(\bf n)\right\rceil)$ is false for some ${\bf n}\in N_T$.
\end{itemize}

The sentences $\exists xT_1(\left\lceil T_2(\dot x)\right\rceil)$, where $T_1,T_2\in\{T,\neg T\}$ are in $Z_2$, whence they are in $L(U)$ and hence true. Applying (T) one can  show that in every case there exists an ${\bf n}\in N_T$ so that $T_1(\left\lceil T_2(\bf n)\right\rceil)$ is true (${\bf n}=\left\lceil A\right\rceil$, where $A$, depending on the case,  is in $L(U)$ or in $F(U)$). These results imply truth part of (tt6) when  $T_1$ and $T_2$ are in $\{T,\neg T\}$. Falsity part in (tt6) is then valid by bivalence of $\mathcal L_T$.
\smallskip

$\forall xT_1(\left\lceil T_2(\dot x)\right\rceil)$ is false because its negation is in $Z_2$ and hence true. Using (T) it is easy to show that
$T_1(\left\lceil T_2(\bf n)\right\rceil)$ is false for some ${\bf n}\in N_T$ whenever $T_1,T_2\in\{T,\neg T\}$. This proves the falsity  part of (tt7), and also implies the truth part   by bivalence of $\mathcal L_T$.
\smallskip

Since $L$ has properties (ii), then for every predicate $P\in \mathcal P^m$  with arity $m\ge 1$ the following properties hold in the valuation of $L$, and hence in the valuation  (I) by Lemma \ref{L31}.
\begin{itemize}
\item[(p6)]  The sentence of the form $q_1x_1\dots q_mx_m P(x_1,\dots,x_m)$ is true iff  $P(b_1,\dots,b_m)$ is true for all  $b_i\in N_P^i$ when $q_i$ is $\forall$, and for some $b_i\in N_P^i$ when $q_i$ is $\exists$. $q_1x_1\dots q_mx_m P(x_1,\dots,x_m)$ is  false iff  $P(b_1,\dots,b_m)$ is false for all $b_i\in N_P^i$ when $q_i$ is $\exists$, and for some $b_i\in N_P^i$ when $q_i$ is $\forall$.
\end{itemize}
If $P$ is a predicate of $\mathcal P$ with arity $m\ge 1$, and if $q_1,\dots,q_m$ is any  $m$-tuple of quantifiers $\forall$ and $\exists$,  then the sentence $q_1x_1\dots q_mx_m T(\left\lceil P(\dot x_1,\dots,\dot x_m)\right\rceil)$ is true iff  it is in $L_0(U)$ iff it is in $Z_1^m$ iff (by \eqref{E50}) the sentence $q_1x_1\dots q_mx_m P(x_1,\dots,x_m)$ is true iff (by (p6)) the sentence $P(b_1,\dots,b_m)$ is true in $L$, and hence also in $\mathcal L_T$  for all $b_i\in N_P^i$ when $q_i$ is $\forall$, and for some  $b_i\in N_P^i$ when $q_i$ is $\exists$ iff (by (T)) the sentence $T(\left\lceil P(b_1,\dots,b_m)\right\rceil)$ is true  for all choices of $b_i\in N_P^i$ when $q_i$ is $\forall$, and for some choices of $b_i\in N_P^i$ when $q_i$ is $\exists$.
\smallskip

The above equivalences and the bivalence of $\mathcal L_T$ imply the following result.
\begin{itemize}
\item[(tp6)]\    The sentence $q_1x_1\dots q_mx_m T(\left\lceil P(\dot x_1,\dots,\dot x_m)\right\rceil)$ true iff  the sentence $T(\left\lceil P(b_1,\dots,b_m)\right\rceil)$ is true  for all choices of $b_i\in N_P^i$ when $q_i$ is $\forall$, and for some choices of $b_i\in N_P^i$ when $q_i$ is $\exists$.
\item[]\ $q_1x_1\dots q_mx_m T(\left\lceil P(\dot x_1,\dots,\dot x_m)\right\rceil)$ is  false iff the sentence $T(\left\lceil P(b_1,\dots,b_m)\right\rceil)$ is false for all $b_i\in N_P^i$ when $q_i$ is $\exists$, and for some $b_i\in N_P^i$ when $q_i$ is $\forall$.
\end{itemize}

Similarly, applying \eqref{E50}, (T), (p6) with $P$ replaced by $\neg P$ and bivalence of $\mathcal L_T$ it one can show that $T$ can be replaced in  (tp6) by $\neg T$  when  $P$ is in $\mathcal P^m$ and  $q_1,\dots,q_m$ is any  $m$-tuple of quantifiers $\forall$ and $\exists$.
\smallskip

The above proof shows that $\mathcal L_T$ has properties (ii).
\end{proof}


\section{Theories of truth}\label{S4}

Now we are ready to present our main results.
\smallskip

\begin{theorem}\label{T1} Let $L_0$ be a countable language which conforms to classical logic and has not a truth predicate. The language $\mathcal L_T$ defined in Definition \ref{D1} and valuated by (I) has properties (i)--(iii), and has a definitional theory of truth (shortly DTT). $T$ is a truth predicate, and $\neg T$ is a non-truth predicate. 
\end{theorem}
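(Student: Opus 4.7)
The strategy is that almost all of the work has already been done in Section \ref{S3}: Lemma \ref{L33} gives bivalence, Lemma \ref{L32} gives properties (i) and (ii), and Lemma \ref{L31} aligns the new valuation with the original valuation on $L$. So the plan is mainly to assemble these pieces and then verify the truth-predicate conditions using the equivalence (T) extracted in the proof of Lemma \ref{L32}.

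First I would observe that properties (i)--(iii) of $\mathcal{L}_T$ are immediate: (iii) is exactly Lemma \ref{L33}, while (i) and (ii) are exactly Lemma \ref{L32}. Then I would argue that the theory of truth is \emph{definitional}: by Definition \ref{D1}, the set of sentences of $\mathcal{L}_T$ is $L(U)\cup F(U)$, where $U$ is a fixed consistent subset of $\mathcal{D}$ with $U=G(U)$ supplied by Theorem \ref{T2}, and the valuation (I) \emph{defines} truth as membership in $L(U)$ and falsity as membership in $F(U)$. Since $L(U)$ and $F(U)$ are specified in set-theoretic terms by the recursion \eqref{E201}--\eqref{E21} and \eqref{E27}, truth in $\mathcal{L}_T$ is defined rather than postulated.

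The remaining task is to verify that $T$ is a truth predicate and that $\neg T$ is a non-truth predicate. The key tool is equivalence (T), established in the proof of Lemma \ref{L32}, which states for every sentence $A$ of $\mathcal{L}_T$:
\begin{equation*}
A \text{ is true} \iff T(\lceil A\rceil) \text{ is true} \iff \neg T(\lceil A\rceil) \text{ is false},
\end{equation*}
and dually for falsity. To establish the $T$-rule I would reason as follows: by (T), $T(\lceil A\rceil)$ and $A$ always have the same truth value in $\mathcal{L}_T$; by the biconditional rule (t5) proved in Lemma \ref{L32}, it follows that $T(\lceil A\rceil)\leftrightarrow A$ is true. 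This is precisely the $T$-rule, so $T$ qualifies as a truth predicate of $\mathcal{L}_T$. Symmetrically, (T) yields that $\neg T(\lceil A\rceil)$ and $A$ always have opposite truth values, i.e.\ $\neg T(\lceil A\rceil)$ is true iff $A$ is false; combined with the negation rule (t1), this gives $\neg T(\lceil A\rceil)\leftrightarrow \neg A$ true for every sentence $A$, so $\neg T$ behaves as a non-truth predicate.

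There is essentially no obstacle in the proof itself --- it is a packaging argument. The only small care I would take is to make sure that the $T$-rule is being stated for \emph{every} sentence of $\mathcal{L}_T$ (not merely of $L$ or of $\mathcal{L}$), which is legitimate because Definition \ref{D1} included all of $L(U)\cup F(U)$ in $D_T$ and so the numerals $\lceil A\rceil$ are in $N_T$ for every sentence $A$ of $\mathcal{L}_T$; and that equivalence (T) was indeed proved for all such $A$. Given that both conditions hold, the theorem follows.
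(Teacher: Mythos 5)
Your proposal is correct and follows essentially the same route as the paper: cite the lemmas of Section \ref{S3} for properties (i)--(iii), note that (I) makes the theory definitional, and derive the $T$-rule from the equivalences (T) together with the biconditional rule (t5), handling $\neg T$ dually. The only cosmetic difference is that the paper phrases the non-truth-predicate condition as ``$\neg T(\left\lceil A\right\rceil)\leftrightarrow A$ is false'' rather than your equivalent ``$\neg T(\left\lceil A\right\rceil)\leftrightarrow \neg A$ is true,'' and it additionally remarks that the domain $D_T$ meets Feferman's condition for domains of truth predicates.
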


\begin{proof} Properties (i)--(iii) given in Introduction are valid for $\mathcal L_T$ by
 Lemma \ref{L31}  and Lemma \ref{L32}. Thus $\mathcal L_T$ conforms to classical logic.
 \smallskip

The results (T) derived in the proof of Lemma \ref{L32} and biconditional rule (t5) imply that the sentence
$T(\left\lceil A\right\rceil)\leftrightarrow A$ is true and the sentence $\neg T(\left\lceil A\right\rceil)\leftrightarrow A$ is false for every sentence $A$ of $\mathcal L_T$.
  $T$ and $\neg T$ are predicates of $\mathcal L_T$, and their domain $D_T$, the set
all sentences of $\mathcal L_T$, satisfies the condition presented in \cite[p. 7]{Fe} for the domains of truth predicates. Consequently, $T$ is a truth predicate and $\neg T$ is a non-truth predicate.
The above results imply that $\mathcal L_T$ has a theory of truth. It is definitional, since truth values of sentences are defined by (I).
\end{proof}
\smallskip

Next we  show that $\mathcal L_T$ has  a semantical theory of truth
under the following assumptions.
\begin{itemize}
\item[(s1)] \ \ The object language $L_0$ is countable, has not a truth predicate, and every sentence of $L_0$  is

$ $ \ \   meaningful and is valuated by its meaning either as true or as false.
\item[(s2)] \ \ Standard meanings are assigned to logical symbols.
\item[(s3)] \ \ The sentence $T(\bf n)$ means: 'the sentence whose G\"odel number has $\bf n$ as its numeral is true'.
\end{itemize}

 At first we prove preliminary Lemmas.

\begin{lemma}\label{L42} Under the hypotheses (s1) and (s2) the object language $L_0$ has an extension $L$ which has properties (i)--(iii) given in Introduction when its sentences are valuated by their meanings. In particular, $L_0$ conforms to classical logic. 
\end{lemma}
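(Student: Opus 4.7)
The plan is to construct $L$ as the closure of $L_0$ under the sentence-building operations required by property (i), and to show that the meaning-valuation supplied by (s1)--(s2) automatically realises properties (ii) and (iii) on this closure. I would build $L$ stagewise, paralleling the construction of $\mathcal L$ from $L_T$ in Section~\ref{S2}. Set $K_0=L_0$; by (s1) each sentence of $K_0$ already has a meaning that supplies it with a unique truth value, so $K_0$ is bivalent under the meaning-valuation.

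Given $K_n$ together with its meaning-valuation, I would let $K_{n+1}$ be $K_n$ together with (a) all compound sentences $\neg A$, $A\vee B$, $A\wedge B$, $A\rightarrow B$, $A\leftrightarrow B$ formed from sentences $A,B$ of $K_n$, and (b) for each predicate $P$ of arity $m$ appearing in $K_n$, the instantiations $P(b_1,\dots,b_m)$ for all $(b_1,\dots,b_m)\in N_P$ and the quantified forms $q_1x_1\dots q_mx_m P(x_1,\dots,x_m)$ for every $m$-tuple of quantifiers. Hypothesis (s2) assigns the standard meanings to $\neg,\vee,\wedge,\rightarrow,\leftrightarrow,\forall,\exists$; combined with the meanings already attached to the constituents, this gives each newly added sentence a meaning, and therefore, by bivalence of the standard meanings, a unique truth value true or false. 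The classical truth-conditions of (ii) are then literally the standard meanings themselves, so they hold at stage $K_{n+1}$. I would then define $L=\bigcup_{n=0}^\infty K_n$; countability of $L_0$ and the fact that each stage adds only countably many new sentences ensure that $L$ is countable.

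Finally, I would verify (i)--(iii) for $L$. Property (i) is immediate from the construction: $L$ contains every required compound and every predicate instantiation together with the quantified sentences, and if $A,B$ are sentences of $L$ they appear in some common $K_n$, whence $\neg A,\,A\vee B,\,\dots$ lie in $K_{n+1}\subseteq L$. Property (ii) holds because each classical rule is precisely the standard meaning assigned to the associated logical symbol by (s2), applied to the meaning-valuation of $L$. Property (iii) holds because at every stage the meaning-valuation is bivalent. Since $L$ has properties (i)--(iii), it qualifies as a basic extension of $L_0$, so $L_0$ conforms to classical logic.

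The main obstacle I anticipate is the predicate clause: one must be sure that the standard meaning of a quantifier, combined with the meanings of all instantiations $P(b_1,\dots,b_m)$, really does yield the truth-condition of (ii) rather than some weaker schema. This is handled by insisting, as in (i), that each object of the domain of discourse is named by a term in $N_P^i$, so that the universal (resp. existential) meaning of $q_1x_1\dots q_mx_m P$ coincides with the conjunction (resp. disjunction) of the meanings of the sentences $P(b_1,\dots,b_m)$ over the relevant named tuples; once this is secured, clauses (ii) for quantifiers follow from (s2) in the same way as the propositional clauses.
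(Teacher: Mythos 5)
Your proposal is correct and follows essentially the same route as the paper: a stagewise closure of $L_0$ under the propositional connectives and quantified predicate instances, with (s1)--(s2) supplying bivalent meaning-valuations at each stage so that (ii) and (iii) hold, and a common-stage argument giving (i) for the union. The only cosmetic difference is that you re-add quantified sentences at every stage while the paper adds them only at the first stage (harmless, since no new predicates arise later), and you make the "every object is named" point for the quantifier clause slightly more explicit than the paper does.
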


\begin{proof} The object language $L_0$ is bivalent by (s1). 
The basic extension $L$ of $L_0$ which has properties (i)--(iii) when its sentences are valuated by their meanings is constructed as follows.
The first extension $L_1$ of $L_0$ is formed  by adding those sentences  $\neg A$, $A\vee B$, $A\wedge B$, $A\rightarrow B$, $A\leftrightarrow B$ and $q_1x_1\dots q_mx_m P(x_1,\dots,x_m)$ which are not in $L_0$ when $A$ and $B$ go through all sentences of $L_0$, $P$ its predicates and their negations (added if necessary), and $(q_1,\dots,q_m)$ $m$-tuples, where 
$q_i$'s are  either $\forall$ or $\exists$. If there exist objects of $D$ without names, add terms to name them. Valuating the sentences of $L_1$ by their meanings, the assumptions (s1) and (s2) ensure that properties (ii) and (iii) are valid.

When the language  $L_n$, $n\ge 1$ is defined, define the language $L_{n+1}$ by adding those sentences  $\neg A$, $A\vee B$, $A\wedge B$, $A\rightarrow B$, $A\leftrightarrow B$ which are not in $L_n$ when $A$ and $B$ are sentences of $L_n$. Valuating the sentences of $L_{n+1}$ by their meanings, the properties (ii) and (iii) are valid by assumptions (s1) and (s2). The union $L$ of languages $L_n$, $n\in \mathbb N_0$ has also properties (ii) and (iii).

If $A$ and $B$ denote sentences of $L$, there exist $n_1$ and $n_2$ in $\mathbb N_0$ such that $A$ is in $L_{n_1}$ and $B$ is in $L_{n_2}$.
Denoting $n=\max\{n_1,n_2\}$, then $A$ and $B$ are sentences of $L_n$. Thus the sentences $\neg A$, $A\vee B$, $A\wedge B$, $A\rightarrow B$ and $A\leftrightarrow B$ are in $L_{n+1}$, and hence in $L$. If $P$ is a predicate of $L_1$ with arity $m\ge 1$, then for each $m$-tuple $(q_1,\dots,q_m)$, where 
$q_i$'s are  either $\forall$ or $\exists$, the sentence $q_1x_1\dots q_mx_m P(x_1,\dots,x_m)$ is in $L_1$, so that it is in $L$. Thus $L$ has also  properties (i).

Consequently, the basic extension $L$ of $L_0$ constructed above has all properties (i) -- (iii) when its sentences are valuated by their meanings. This implies that $L_0$ conforms to classical logic.
\end{proof}

 \begin{lemma}\label{l70} Make the assumptions (s1)--(s3), and let $L$ be the language constructed in the proof of Lemma \ref{L42}. Let $\mathcal L_T$, $\mathcal D$ and $U$ be as in Definition \ref{D1}, and let $W$ be the set of G\"odel numbers of true sentences of $L$. Given a consistent subset $V$ of $\mathcal D$ which satisfies $W\subseteq V\subseteq U$, assume that every sentence of $\mathcal L_T$ whose G\"odel number is in $V$ is true and not false by its meaning.
Then every sentence of $L(V)$ (defined by \eqref{E21} with $U$ replaced by $V$) is true and not false by its meaning.
\end{lemma}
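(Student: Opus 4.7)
My plan is to prove the claim by induction on $n\in\mathbb N_0$ that every sentence of $L_n(V)$ is true and not false by its meaning, and then conclude for $L(V)=\bigcup_{n\in\mathbb N_0}L_n(V)$ via \eqref{E21}.

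For the base case I treat each piece of $L_0(V)=Z\cup Z_1(V)\cup Z_2(V)\cup Z_1\cup Z_2\cup Z_3\cup Z_4$ in turn. Sentences of $Z$ are true (and not false) by meaning by Lemma \ref{L42}. For $Z_1(V)$: if ${\bf n}=\left\lceil A\right\rceil$ with $\#A\in V$, the hypothesis on $V$ gives that $A$ is true by meaning, so (s3) yields that $T({\bf n})$ is true by meaning. For $Z_2(V)$: if $\#[\neg A]\in V$, then $\neg A$ is true by meaning, so by (s2), $A$ is false by meaning, and by (s3), $T({\bf n})$ is then false and $\neg T({\bf n})$ is true by meaning. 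The sentences of $Z_1$ and $Z_2$ are quantified over numerals in $N_T$, and I handle them by combining (s2) and (s3) with the observation that $D_T=L(U)\cup F(U)$ contains both true and false sentences (for instance, any $A\in Z$ and its negation), so that the existence-form sentences hold and the universal-form sentences fail in the intended classical sense. For $Z_3^m$: a sentence $q_1x_1\cdots q_mx_m T(\left\lceil P(\dot x_1,\dots,\dot x_m)\right\rceil)$ belongs to $Z_3^m$ exactly when $q_1x_1\cdots q_mx_m P(x_1,\dots,x_m)$ is true in $L$, hence true by meaning by Lemma \ref{L42}; combining this with (s2), (s3), and the agreement of the meaning-valuation of each instance $P(b_1,\dots,b_m)$ with its truth value in $L$, the $T$-sentence is true (and not false) by meaning. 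The $Z_4^m$-sentences are handled analogously with $\neg T$, using that $q_1x_1\cdots q_mx_m\neg P(x_1,\dots,x_m)$ is true in $L$.

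For the inductive step, assume every sentence of $L_n(V)$ is true and not false by its meaning. Each $L_n^k(V)$, $k=0,\dots,8$, is built from sentences of $L_n(V)$ in accordance with the classical truth-table of its leading connective, as prescribed by \eqref{E203}. Since (s2) assigns the standard classical meanings to the connectives, a direct case analysis using the induction hypothesis shows that every sentence of $L_n^k(V)$ is true (and not false) by its meaning; for example, if $A,B\in L_n(V)$, then $A\wedge B\in L_n^2(V)$ is true by meaning because both $A$ and $B$ are, while if $A\in L_n(V)$, then $\neg(\neg A)\in L_n^0(V)$ is true by meaning because $A$ is.

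The main obstacle will be the base case, specifically the treatment of the mixed quantified sentences in $Z_1$, $Z_2$, $Z_3$, $Z_4$. One must set up carefully that (s3) cooperates uniformly with quantification over $N_T$, and that the meaning-valuation of predicate instances $P(b_1,\dots,b_m)$ inherited from $L$ via Lemma \ref{L42} matches their truth-valuation within $\mathcal L_T$. Once this compatibility is in place, both the remaining base-case items and the induction step become routine applications of classical semantics.
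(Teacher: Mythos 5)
Your proposal is correct and follows essentially the same route as the paper: induction on $n$, with the base case handled piecewise over $Z$, $Z_1(V)$, $Z_2(V)$, $Z_1$, $Z_2$, $Z_3$, $Z_4$ using (s1)--(s3) and Lemma \ref{L42}, and the inductive step reduced to the classical truth tables encoded in \eqref{E203}. The points you flag as needing care (the quantified sentences of $Z_1$, $Z_2$ and the iterated $T$-instances over $N_T$, and the agreement of the meaning-valuation of instances $P(b_1,\dots,b_m)$ with their valuation in $L$) are exactly the ones the paper spells out, and your sketch of how to resolve them --- $N_T$ names both sentences that are true by meaning and sentences that are false by meaning, so existential forms hold and universal forms fail --- is the paper's argument.
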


\begin{proof} Because $V\subseteq U=G(U)$, then every sentence whose G\"odel number is in $V$, is in $\mathcal L_T$.
At first we prove that every sentence of $L_0(V)$ is true and not false by its meaning.
\smallskip

By Lemma \ref{L42} $L$ is bivalent. Thus every true sentence of $L$, i.e., every sentence of $Z$ is true and not false by its meaning.

 Let $A$ denote a sentence of $\mathcal L_T$.
  By (s3) the sentence  $T(\left\lceil A\right\rceil)$ means that 'the sentence whose G\"odel number has $\left\lceil A\right\rceil$ as its numeral, i.e., the sentence $A$, is true'. Thus, by its meaning,  $T(\left\lceil A\right\rceil)$   is true iff $A$ is true and false iff $A$ is false.
	
Assume that the G\"odel number of a sentence $A$ is in $V$.
Since $A$ is by a hypothesis true and not false by its meaning,  then $T(\left\lceil A\right\rceil)$ is true and not false by its meaning.
This implies by \eqref{E20} that the sentences of $Z_1(V)$ are true and not false by their meanings.
By the standard meaning of negation the sentence $\neg T(\left\lceil A\right\rceil)$ is false and not true by its meaning. Replacing $A$ by $T(\left\lceil A\right\rceil)$ and $\neg T(\left\lceil A\right\rceil)$ it follows from the above results that the sentences $T(\left\lceil T(\left\lceil A\right\rceil)\right\rceil)$ and $\neg T(\left\lceil \neg T(\left\lceil A\right\rceil)\right\rceil)$ are true and not false by their meanings, and the sentences
$T(\left\lceil \neg T(\left\lceil A\right\rceil)\right\rceil)$  and $\neg T(\left\lceil T(\left\lceil A\right\rceil)\right\rceil)$ are false and not true
by their meanings.

Let  $A$ denote such a sentence
of $\mathcal L_T$, that  the G\"odel number of the sentence $\neg A$ is in $V$.
$\neg A$ is by a hypothesis  true and not false by its meaning, so that $A$ is false and not true by its meaning since $V$ is consistent. Thus the sentence $T(\left\lceil A\right\rceil)$ is false and not true by its meaning, and the sentence $\neg T(\left\lceil A\right\rceil)$
is true and not false by its meaning. It then follows from  \eqref{E20} that the sentences of $Z_2(V)$ are true and not false by their meanings.

Replacing $A$ by $T(\left\lceil A\right\rceil)$ and $\neg T(\left\lceil A\right\rceil)$ we then obtain that the sentences $T(\left\lceil T(\left\lceil A\right\rceil)\right\rceil)$  and $\neg T(\left\lceil \neg T(\left\lceil A\right\rceil)\right\rceil)$ are  false and not true by their meanings, and  the sentences  $\neg T(\left\lceil T(\left\lceil A\right\rceil)\right\rceil)$  and $T(\left\lceil \neg T(\left\lceil A\right\rceil)\right\rceil)$ are true and not false by their meanings.

The set $N_T$ of numerals, defined by \eqref{E28}, is formed by numerals $\left\lceil A\right\rceil$, where  $A$ goes through all the sentences of $\mathcal L_T$.  Thus, by results proved above
$T({\bf n})$, $T(\left\lceil T({\bf n})\right\rceil)$, $\neg T(\left\lceil\neg T({\bf n})\right\rceil)$, $\neg T({\bf n})$, $\neg T(\left\lceil T({\bf n})\right\rceil)$ and $T(\left\lceil\neg T({\bf n})\right\rceil)$ are  for some ${\bf n}\in N_T$  true and not false  by their meanings and for some ${\bf n}\in N_T$ false and not true by their meanings.
These results and the standard meanings of quantifiers and negation imply that $\exists x T(x)$, $\exists xT(\left\lceil T(\dot x)\right\rceil)$, $\exists x\neg T(\left\lceil\neg T(\dot x)\right\rceil)$, $\exists x \neg T(x)$, $\exists x\neg T(\left\lceil T(\dot x)\right\rceil)$ and $\exists xT(\left\lceil\neg T(\dot x)\right\rceil)$ are  true and not false by their meanings, and their negations are false and not true by their meanings, whereas $\forall x T(x)$, $\forall xT(\left\lceil T(\dot x)\right\rceil)$, $\forall x\neg T(\left\lceil\neg T(\dot x)\right\rceil)$, $\forall x \neg T(x)$, $\forall x\neg T(\left\lceil T(\dot x)\right\rceil)$ and $\forall xT(\left\lceil\neg T(\dot x)\right\rceil)$ are false and not true by their meanings, and their negations are true and not false by their meanings.

By above results and \eqref{E20} the sentences of $Z_1$ and  $Z_2$ are  true and not false by their meanings.

Let $P$ be a predicate in $\mathcal P^m$, and let $q_1,\dots,q_m$ be an  $m$-tuple of quantifiers $\forall$ and $\exists$.
Since the sentences $q_1x_1\dots q_mx_m P(x_1,\dots,x_m)$ and $P(b_1,\dots,b_m)$ are in $L$, they are by their meanings either true and not false,  or false and not true.

If $q_1x_1\dots q_mx_m P(x_1,\dots,x_m)$ is true and not false by its meaning, then
$P(b_1,\dots,b_m)$ is true and not false by its meaning for all $b_i$ when $q_i$ is $\forall$, and for some $b_i$ when $p_i$ is $\exists$. Thus, by (s3), $T(\left\lceil P(b_1,\dots,b_m)\right\rceil)$ is true and not false by its meaning for all $b_i$ when $q_i$ is $\forall$, and for some $b_i$ when $p_i$ is $\exists$. Consequently,  $q_1x_1\dots q_mx_m T(\left\lceil P(b_1,\dots,b_m)\right\rceil)$ is true and not false by its meaning.

By its meaning $\neg P(b_1,\dots,b_m)$ is true and not false iff $P(b_1,\dots,b_m)$ is false and not true iff, by (s3), $T(\left\lceil P(b_1,\dots,b_m)\right\rceil)$ is false and not true iff $\neg T(\left\lceil P(b_1,\dots,b_m)\right\rceil)$ is true and not false.
Thus the sentence $q_1x_1\dots q_mx_m \neg T(\left\lceil P(\dot x_1,\dots,\dot x_m)\right\rceil)$  is true and not false by its meaning iff the sentence $q_1x_1\dots q_mx_m \neg P(x_1,\dots,x_m)$ is true and not false by its meaning.

It follows from the above results and \eqref{E50} that the sentences of $Z_1^m$ and $Z_2^m$ are true and not false by their meanings.
Consequently, the sentences of $Z_3$ and $Z_4$ are by \eqref{E20} true and not false by their meanings.

The above results and \eqref{E201} imply that every sentence of $L_0(V)$ is true and not false by its meaning. Thus the following property holds when $n=0$.
\begin{enumerate}
\item[(h3)] \ \ Every sentence of $L_n(V)$ is true and not false by its meaning.
\end{enumerate}
Make the induction hypothesis: (h3) holds for some $n\in\mathbb N_0$.

Given a sentence of  $L_n^0(V)$, it is of the form $\neg(\neg A)$, where  $A$ is in $L_n(V)$.
$A$ is by (h3) true and not false by its meaning. Thus, by standard meaning of negation, its double application implies that the sentence $\neg(\neg A)$,  and hence the given sentence, is true and not false by its meaning.

If a sentence is in $L_n^1(V)$, it is of the form $A\vee B$, where  $A$ or $B$ is in $L_n(V)$.
By (h3) at least one of the sentences  $A$ and  $B$ is true and not false by its meaning.
 Thus the sentence
$A\vee B$, and hence  given sentence, is true and not false by its meaning.

 Similarly it can be shown that if (h3) holds, then every sentence of $L_n^k(V)$, where  $2 \le k\le 8$, is true and not false by its meaning.

The above results imply that under the induction hypothesis (h3)
every sentence of  $L_n^k(V)$, where  $0 \le k\le 8$, is true and not false by its meaning.

It then follows from the definition \eqref{E204} of $L_{n+1}(V)$ that if (h3) is valid for some $n\in\mathbb N_0$,
then every sentence of $L_{n+1}(V)$ is true and not false by its meaning.

The first part of this proof shows that (h3) is valid when $n=0$. Thus, by induction,
it is valid for all $n\in\mathbb N_0$.
This result and  \eqref{E21} imply that  every sentence of $L(V)$ is true and not false by its meaning.
\end{proof}

\begin{lemma}\label{L9} Let $U$ be the smallest consistent subset of $\mathcal D$ which satisfies $U=G(U)$. Then under the hypotheses (s1)--(s3) every sentence  of  $\mathcal L_T$ whose G\"odel number is in $U$ is true and not false by its meaning.
\end{lemma}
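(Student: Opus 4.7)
The natural strategy is transfinite induction along the $G$-sequence produced in Theorem \ref{T2}. Recall that $U=U_\alpha$ is the last member of the longest $G$-sequence $(U_\lambda)_{\lambda<\gamma}$, where $\gamma=\alpha+1$, $U_0=W$, and $U_\mu=\underset{\lambda<\mu}{\bigcup}G(U_\lambda)$ for $0<\mu<\gamma$. The plan is to prove by transfinite induction on $\mu<\gamma$ that every sentence of $\mathcal L_T$ whose G\"odel number lies in $U_\mu$ is true and not false by its meaning, and then to read off the lemma by specializing to $\mu=\alpha$.

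For the base case $\mu=0$, I would observe that $U_0=W$ is the set of G\"odel numbers of true sentences of $L$. By Lemma \ref{L42}, the basic extension $L$ satisfies properties (i)--(iii) when its sentences are valuated by their meanings; in particular $L$ is bivalent under this valuation, so each true sentence of $L$ is true and not false by its meaning.

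For the inductive step at $0<\mu<\gamma$, I would suppose the claim already holds for every $U_\lambda$ with $\lambda<\mu$, and then apply Lemma \ref{l70} to $V=U_\lambda$ for each such $\lambda$. The three hypotheses of that lemma are satisfied because $U_\lambda$ is consistent (every member of a $G$-sequence lies in $\mathcal C$), $W=U_0\subseteq U_\lambda$ by monotonicity of the sequence, $U_\lambda\subseteq U_\alpha=U$ because $\lambda\leq\alpha$, and the inductive hypothesis supplies precisely the required meaning-valuation assumption on the sentences whose G\"odel numbers lie in $V$. Lemma \ref{l70} then yields that every sentence of $L(U_\lambda)$ is true and not false by its meaning, equivalently every sentence whose G\"odel number lies in $G(U_\lambda)$ is. Taking the union over $\lambda<\mu$ and invoking $U_\mu=\underset{\lambda<\mu}{\bigcup}G(U_\lambda)$ completes the inductive step uniformly for successor and limit $\mu$.

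The main obstacle is essentially bookkeeping rather than new mathematics: checking that the hypotheses of Lemma \ref{l70} are legitimately in force at each stage, in particular the inclusion $U_\lambda\subseteq U$, which relies on $U=U_\alpha$ being the \emph{last} member of the $G$-sequence and not merely an arbitrary one. Once that inclusion is secured, no further case analysis is required, because Lemma \ref{l70} has already absorbed the work of unpacking the definitions of $Z_1(V)$, $Z_2(V)$, $Z_1$--$Z_4$, and the operations $L_n^k(V)$. Specializing the completed induction to $\mu=\alpha$ then delivers the lemma.
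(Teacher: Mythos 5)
Your proposal is correct and follows essentially the same route as the paper: a transfinite induction along the $G$-sequence of Theorem \ref{T2}, with the base case handled by Lemma \ref{L42} (bivalence of $L$ under the meaning valuation) and the inductive step handled by applying Lemma \ref{l70} to each $U_\lambda$ with $\lambda<\mu$ and taking the union $U_\mu=\underset{\lambda<\mu}{\bigcup}G(U_\lambda)$. The hypothesis checks you flag (consistency of $U_\lambda$, $W\subseteq U_\lambda\subseteq U$) are exactly the ones the paper verifies.
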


\begin{proof} By Theorem \ref{T2} the smallest consistent subset $U$ of $\mathcal D$ which satisfies $U=G(U)$ is the last member of the transfinite sequence $(U_\lambda)_{\lambda<\gamma}$  constructed in the proof of that Theorem.
We prove by transfinite induction that the following result holds for all $\lambda < \gamma$.
\begin{enumerate}
\item[(H)] Every sentence  of  $\mathcal L_T$ whose G\"odel number is in $U_\lambda$ is true and not false by its meaning.
\end{enumerate}
Make the induction hypothesis: There is a $\mu$ satisfying $0<\mu< \gamma$ such that (H) holds for all $\lambda < \mu$.

Let $\lambda < \mu$ be given. Because $U_\lambda$ is consistent and $W\subseteq U_\lambda\subseteq U$ for every $\lambda < \mu$, it follows from the induction hypothesis and Lemma \ref{l70} that every sentence of $L(U_\lambda)$ is true and not false by its meaning. This implies by \eqref{E22} that
(H) holds  when $U_\lambda$ is replaced $G(U_\lambda)$, for every  $\lambda <\mu$. Thus (H) holds when $U_\lambda$ is replaced by the union of those sets. But this union is $U_\mu$ by Theorem \ref{T2} (G), whence (H) holds when $\lambda =\mu$.

When $\mu =1$, then $\lambda<\mu$ iff $\lambda=0$.
$U_0=W$, i.e., the set of G\"odel numbers of true sentences of $L$. Since  $L$, valuated by meanings of its sentences, is bivalent by Lemma \ref{L42}, the sentences of $U_0$ are true and not false by their meanings.
This proves that the induction hypothesis is satisfied  when $\mu=1$.

The above proof implies by transfinite induction properties assumed in (H) for $U_\lambda$ whenever
 $\lambda <\gamma$. In particular the last member of   $(U_\lambda)_{\lambda<\gamma}$ satisfies (H),
which is by Theorem \ref{T2} the smallest consistent subset $U$  of $\mathcal D$ for which $U=G(U)$.
 This  proves the assertion.
\end{proof}

The  next result is  a consequence of Lemma \ref{L42}, Lemma \ref{L9} and Theorem \ref{T1}.

\begin{theorem}\label{T0} Under the hypotheses (s1)--(s3) the extension $\mathcal L_T$ of $L_0$ defined in Definition \ref{D1} has  a semantical theory of truth (shortly STT), when the valuation (I) is replaced in Theorem \ref{T1} with the valuation of the sentences of $\mathcal L_T$ by their  meanings. This valuation is equivalent to  valuation (I), and the results of Theorem \ref{T1} are valid for STT.
\end{theorem}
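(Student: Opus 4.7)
The plan is to reduce the theorem to the agreement of two valuations on $\mathcal L_T$: the stipulated valuation (I) from Definition \ref{D1}, and the meaning-valuation guaranteed by hypotheses (s1)--(s3). Once these two valuations are shown to coincide pointwise on every sentence of $\mathcal L_T$, every conclusion of Theorem \ref{T1} transfers mechanically to the meaning-valuation; and the theory becomes \emph{semantical} (rather than definitional) essentially by unpacking the definition given in Section \ref{S1}, because truth values are then determined by the meanings of sentences instead of being defined via (I).

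First I would establish the ``true'' direction of the equivalence. Let $U$ be the smallest consistent subset of $\mathcal D$ satisfying $U=G(U)$, whose existence is given by Theorem \ref{T2}. Lemma \ref{L9} provides precisely what is needed here: every sentence of $\mathcal L_T$ whose G\"odel number lies in $U$, that is every sentence of $L(U)$, is true and not false by its meaning. For the ``false'' direction I would use the fact, proved in Lemma \ref{L201}, that $L(U)$ is consistent together with the definition $F(U)=\{A:\neg A\in L(U)\}$ from \eqref{E27}. If $A\in F(U)$ then $\neg A\in L(U)$, so by Lemma \ref{L9} the sentence $\neg A$ is true and not false by its meaning; hypothesis (s2) (standard meaning of negation) then forces $A$ to be false and not true by its meaning. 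Combined with the bivalence of $\mathcal L_T$ under (I) proved in Lemma \ref{L33}, this shows that for every sentence $A$ of $\mathcal L_T$, $A$ is true by its meaning iff $A\in L(U)$ iff $A$ is true under (I), and $A$ is false by its meaning iff $A\in F(U)$ iff $A$ is false under (I).

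With the two valuations identified, the conclusions of Theorem \ref{T1} carry over verbatim: properties (i)--(iii) of Introduction hold, $T$ remains a truth predicate and $\neg T$ a non-truth predicate, and the biconditional $T(\left\lceil A\right\rceil)\leftrightarrow A$ is true for every $A\in\mathcal L_T$. The only new observation to record is that the theory is now semantical: by hypotheses (s1) and (s3), for sentences of $L_0$ the truth value is assigned by meaning, and for sentences of the form $T(\mathbf n)$ the meaning (``the sentence whose G\"odel number has $\mathbf n$ as its numeral is true'') together with (s2) propagates to all connectives and quantifiers added in the construction of $\mathcal L$; so truth values are \emph{determined} by meanings, which is exactly the definition of a semantical theory of truth given in Section \ref{S1}. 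Lemma \ref{L42} underwrites this by ensuring that the basic extension $L$ itself is well-defined, bivalent, and governed by classical logic when valuated by meanings.

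The main obstacle is really already absorbed into Lemma \ref{L9}, whose transfinite induction along the $G$-sequence of Theorem \ref{T2} does the delicate propagation of ``true and not false by meaning'' from $W$ up to the fixed point $U$. After that, the proof of Theorem \ref{T0} is short bookkeeping: handling the negative side via consistency and (s2), invoking bivalence to pass from equivalence of ``true'' to full equivalence of the two valuations, and finally observing that the label \emph{semantical} versus \emph{definitional} is simply a statement about how the truth values were produced, and that under (s1)--(s3) they are produced by meanings.
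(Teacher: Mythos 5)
Your proposal is correct and follows essentially the same route as the paper: split on whether $A$ lies in $L(U)$ or $F(U)$, apply Lemma \ref{L9} via $U=G(U)$ and \eqref{E22} to get ``true and not false by meaning'' on $L(U)$, handle $F(U)$ through \eqref{E27} and the standard meaning of negation, conclude the two valuations coincide, and then transfer the conclusions of Theorem \ref{T1}. The extra appeals to Lemma \ref{L33} and Lemma \ref{L201} are harmless bookkeeping the paper leaves implicit.
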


\begin{proof}
Let $A$ denote a sentence of $\mathcal L_T$. $A$ is by Definition \ref{D1} in $L(U)$ or in $F(U)$, where $U$ is the smallest consistent subset of $\mathcal D$ which satisfies $U=G(U)$.
 If $A$ is in $L(U)$, its G\"odel number is in $G(U)=U$ by \eqref{E22}, whence it is by Lemma \ref{L9} true and not false by its meaning.
If $A$ is in $F(U)$, then $\neg A$ is in $L(U)$ by \eqref{E27}. Thus $\neg A$ is true and not false by its meaning, so that $A$ is by the standard meaning of negation false and not true by its meaning. Hence the valuation of $\mathcal L_T$ by meanings of its sentences is equivalent to valuation (I). In particular, $\mathcal L_T$ has properties (i)-(iii). Moreover, the sentence $T(\left\lceil A\right\rceil)\leftrightarrow A$ is true by its meaning and the sentence $\neg T(\left\lceil A\right\rceil)\leftrightarrow A$ is false by its meaning for every sentence $A$ of $\mathcal L_T$.
These results imply that $T$ is a truth predicate and $\neg T$ is a non-truth predicate for $\mathcal L_T$. Consequently, $\mathcal L_T$  
has a semantical theory of truth. 

\end{proof}


\section{Compositionality of truth in theories DTT and STT}\label{S6}

 One of the desiderata introduced in \cite{Lei07} for theories of truth is that truth should be compositional. In this section we  present some logical equivalences which theories DTT and STT of truth prove.

\begin{lemma}\label{L41} Theories DTT and STT of truth presented in Theorems \ref{T1} and \ref{T0} prove the following
 logical equivalences  when $A$ and $B$ are sentences of $\mathcal L_T$, and $P$ is in $\mathcal P$ or $P$ is $T$.
\item[(a0)] \ \ $T(\left\lceil\dots \left\lceil T(\left\lceil A\right\rceil)\right\rceil\dots\right\rceil)\leftrightarrow T(\left\lceil T(\left\lceil A\right\rceil)\right\rceil)\leftrightarrow T(\left\lceil A\right\rceil)\leftrightarrow A$.
\item[(a1)] \ \ $\neg T(\left\lceil A\right\rceil)\leftrightarrow T(\left\lceil \neg A\right\rceil)\leftrightarrow \neg A$.
\item[(a2)] \ \ $T(\left\lceil A\right\rceil)\vee T(\left\lceil B\right\rceil)\leftrightarrow T(\left\lceil A\vee B\right\rceil)\leftrightarrow A\vee B$.
\item[(a3)] \ \ $T(\left\lceil A\right\rceil)\wedge T(\left\lceil B\right\rceil) \leftrightarrow T(\left\lceil A\wedge B\right\rceil) \leftrightarrow  A\wedge B$.
\item[(a4)] \ \ $(T(\left\lceil A\right\rceil)\rightarrow T(\left\lceil B\right\rceil)) \leftrightarrow T(\left\lceil A\rightarrow B\right\rceil) \leftrightarrow  (A \rightarrow B)$.
\item[(a5)] \ \ $(T(\left\lceil A\right\rceil)\leftrightarrow T(\left\lceil B\right\rceil)) \leftrightarrow T(\left\lceil A\leftrightarrow B\right\rceil) \leftrightarrow  (A \leftrightarrow B)$.
\item[(a6)] \ \ $\neg T(\left\lceil A\vee B\right\rceil)\leftrightarrow \neg (A\vee B)\leftrightarrow \neg A\wedge\neg B\leftrightarrow T(\left\lceil\neg A\right\rceil)\wedge T(\left\lceil\neg B\right\rceil)\leftrightarrow\neg T(\left\lceil A\right\rceil)\wedge\neg T(\left\lceil B\right\rceil)$.
\item[(a7)] \ \ $\neg T(\left\lceil A\wedge B\right\rceil)\leftrightarrow \neg (A\wedge B)\leftrightarrow \neg A\vee\neg B\leftrightarrow T(\left\lceil\neg A\right\rceil)\vee T(\left\lceil\neg B\right\rceil)\leftrightarrow \neg T(\left\lceil A\right\rceil)\vee\neg T(\left\lceil B\right\rceil)$.
\item[(a8)] \ \ $\forall xT(\left\lceil P(\dot x)\right\rceil)\leftrightarrow T(\left\lceil \forall xP(x)\right\rceil)\leftrightarrow \forall xP(x)\leftrightarrow \neg \exists x \neg P(x)\leftrightarrow T(\left\lceil\neg\exists x \neg P(x)\right\rceil)\leftrightarrow\neg \exists x \neg T(\left\lceil P(\dot x)\right\rceil)$.
\item[(a9)] \ \ $\exists xT(\left\lceil P(\dot x)\right\rceil) \leftrightarrow T(\left\lceil \exists xP(x)\right\rceil)\leftrightarrow \exists xP(x)\leftrightarrow\neg \forall x \neg P(x)\leftrightarrow T(\left\lceil\neg\forall x \neg P(x)\right\rceil)\leftrightarrow\neg \forall x \neg T(\left\lceil P(\dot x)\right\rceil)$.
\item[(a10)] \ $\neg T(\left\lceil \forall xP(x)\right\rceil)\leftrightarrow T(\left\lceil\neg \forall x (P(x)\right\rceil)\leftrightarrow\neg \forall x P(x)\leftrightarrow \exists x \neg P(x)\leftrightarrow T(\left\lceil\exists x \neg P(x)\right\rceil)$.
\item[(a11)] \ $\neg T(\left\lceil \exists xP(x)\right\rceil) \leftrightarrow T(\left\lceil\neg \exists x P(x)\right\rceil)\leftrightarrow \neg \exists x P(x)\leftrightarrow \forall x \neg P(x)\leftrightarrow T(\left\lceil\forall x \neg P(x)\right\rceil)$.
\end{lemma}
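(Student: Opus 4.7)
The plan is to prove all the equivalences simultaneously for DTT, and then observe that STT inherits them via Theorem \ref{T0}, which asserts that the meaning-based valuation is equivalent to valuation (I). Throughout, I will use two compositional tools established already in the proof of Lemma \ref{L32}: the rule (T) saying $A$ is true iff $T(\left\lceil A\right\rceil)$ is true iff $\neg T(\left\lceil A\right\rceil)$ is false (and dually for falsity), together with the classical rules (t1)--(t7), (tp6), (tt6), (tt7) already verified for $\mathcal L_T$. Since by bivalence and rule (t5) a biconditional $X\leftrightarrow Y$ is true exactly when $X$ and $Y$ have the same truth value, it suffices to check that the expressions linked by $\leftrightarrow$ in each clause (a0)--(a11) agree in truth value in the valuation (I).

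For the purely propositional clauses I would proceed as follows. Clause (a0) follows by applying (T) to $A$, then to $T(\left\lceil A\right\rceil)$, and inductively to the nested term, each application preserving truth value. Clause (a1) combines (T) with the negation rule (t1): $\neg T(\left\lceil A\right\rceil)$ is true iff $T(\left\lceil A\right\rceil)$ is false iff $A$ is false iff $\neg A$ is true iff $T(\left\lceil\neg A\right\rceil)$ is true. For (a2)--(a5) I would invoke (T) to replace each $T(\left\lceil A\right\rceil)$ by $A$, then apply the matching connective rule (t3), (t2), (t4), (t5) respectively to verify the middle biconditional. The De Morgan-type clauses (a6) and (a7) are obtained from (a1), (a2), (a3) together with the falsity halves of (t2) and (t3).

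For the quantifier clauses (a8)--(a11), the key ingredients are (tp6) and its $\neg T$ analogue (applicable when $P\in\mathcal P$), and (tt6), (tt7) (applicable when $P$ is $T$, via the form $T_1(\left\lceil T_2(\dot x)\right\rceil)$). Combining these with (T) shows, for instance, that $\forall x T(\left\lceil P(\dot x)\right\rceil)$, $T(\left\lceil \forall xP(x)\right\rceil)$ and $\forall xP(x)$ all share a truth value; the further equivalence with $\neg\exists x\neg P(x)$ and with $\neg\exists x\neg T(\left\lceil P(\dot x)\right\rceil)$ then comes from the classical quantifier duality, which holds in $\mathcal L_T$ because $\mathcal L_T$ enjoys properties (i)--(iii) by Lemma \ref{L32}. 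For the transfer to STT, I would appeal to Theorem \ref{T0}: every sentence of $\mathcal L_T$ receives the same truth value from its meaning as from (I), so each established biconditional is also true by its meaning, and the compositional rules are provable in STT as well. The main obstacle I anticipate is the quantifier case when $P$ is $T$: the rules (tt6), (tt7) are stated for the specific doubled form $T_1(\left\lceil T_2(\dot x)\right\rceil)$ rather than for $T(\left\lceil T(\dot x)\right\rceil)$ in isolation, so I need to pay attention to the matching of the quoted predicate with the syntactic shapes actually added to $\mathcal L_T$ in Section \ref{S2}, and to check that (a0) indeed resolves the iterated quoting used implicitly in the quantifier clauses.
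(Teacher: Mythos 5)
Your proposal is correct and follows essentially the same route as the paper's own proof: reduce each biconditional to an agreement of truth values via bivalence and (t5), discharge the $T(\left\lceil\cdot\right\rceil)$ occurrences with (T) (equivalently the $T$-rule), invoke the compositional rules (t1)--(t5), (tp6), (tt6), (tt7) for the connective and quantifier steps, appeal to De Morgan's laws for the dual forms in (a6)--(a11), and transfer to STT by the equivalence of the two valuations from Theorem \ref{T0}. Your explicit attention to the case $P=T$ in the quantifier clauses is a point the paper's own (very terse) proof glosses over, but it does not change the method.
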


\begin{proof} T-rule implies equivalences of (a0).

The first equivalences in (a1)--(a5) are easy consequences of rules (t1)--(t5) and $T$-rule (cf. \cite[Lemma 4.1]{Hei18}). Their second equivalences are consequences of $T$-rule.

The first and last equivalences of (a6) and (a7) are consequences of (a1).
 Their second equivalences are De Morgan's laws of classical logic (cf. \cite{1}).  Third equivalences of (a6) and (a7) follow from $T$-rule.
The first equivalences of (a8) and (a9) are easy consequences of rules (tp6) and (tp7) and $T$-rule (cf. \cite[Lemma 4.2]{Hei18}). $T$-rule implies their second equivalences.
The third equivalences are  De Morgan's laws for quantifiers (cf. \cite{1}). The fourth ones follow from $T$-rule. De Morgan's laws with $P(x)$ replaced by $T(\left\lceil P(\dot x)\right\rceil)$ imply equivalence of the last and the first sentences.
(a10) and (a11) are negations to some equivalences of (a8) and (a9).
\end{proof}

Let $L_0$ be a countable and bivalent first-order language with or without identity. It conforms to classical logic.
If $P$ and $Q$ are predicates of $L_0$ with arity 1 and domain $D$, then $\neg P$, $P\vee Q$, $P\wedge Q$, $P\rightarrow Q$ and
$P\leftrightarrow Q$ are predicates of $L_0$ with domain $D$. Replacing $P$ and/or $Q$ by some of them we obtain new predicates with domain $D$, and so on. Thus $P$ in (a8) and (a9) can be replaced by anyone of these predicates.  Their universal and existential quantifications are  sentences of  $L$. They are also sentences of $\mathcal L_T$. Anyone of them can be the sentence $A$ and/or the sentence $B$
in results (a1)--(a7) derived above. Moreover, $P$ can be replaced by anyone of those predicates in (a8)--(a11). Take a few examples.
\smallskip

$\forall x T(\left\lceil P(\dot x)\rightarrow Q(\dot x)\right\rceil) \leftrightarrow T(\left\lceil \forall x (P(x)\rightarrow Q(x))\right\rceil)\leftrightarrow \forall x (P(x)\rightarrow Q(x))$.

$\exists x T(\left\lceil P(\dot x)\wedge Q(\dot x)\right\rceil)\leftrightarrow T(\left\lceil\exists x(P(x)\wedge Q(x)\right\rceil)\leftrightarrow\exists x(P(x)\wedge Q(x))$.

$\forall x T(\left\lceil P(\dot x)\rightarrow \neg Q(\dot x)\right\rceil) \leftrightarrow T(\left\lceil \forall x (P(x)\rightarrow \neg Q(x))\right\rceil)\leftrightarrow \forall x (P(x)\rightarrow \neg Q(x))$.

$\exists xT(\left\lceil P(\dot x)\wedge\neg Q(\dot x)\right\rceil)\leftrightarrow T(\left\lceil\exists x(P(x)\wedge \neg Q(x)\right\rceil)\leftrightarrow\exists x(P(x)\wedge\neg Q(x))$.
\smallskip

These equivalences   correspond to the four Aristotelian forms: 'All $P$'s are $Q$'s', 'some $P$'s are $Q$'s',
 'no $P$'s are $Q$'s' and 'some $P$'s are not $Q$'s' (cf. \cite{1}).
\smallskip

Let $P$ be a predicate of $L_0$ with arity $m>1$, and let $q_1,\dots q_m$ be any of the $2^m$ different $m$-tuples which can be formed from quantifiers $\forall$ and $\exists$. Theories DTT  and STT presented in Theorems \ref{T1} and \ref{T0} prove the following logical equivalences.\\
$T(\left\lceil q_1 x_1\dots q_m x_mP(x_1,\dots,x_m)\right\rceil)\leftrightarrow q_1 x_1\dots q_m x_m P(x_1,\dots,x_m)\leftrightarrow q_1 x_1\dots q_m x_mT(\left\lceil P((\dot x_1,\dots,\dot x_m)\right\rceil)$,\\
$T(\left\lceil q_1 x_1\dots q_m x_m\neg P(x_1,\dots,x_m)\right\rceil)\leftrightarrow q_1 x_1\dots q_m x_m\neg P(x_1,\dots,x_m)\leftrightarrow q_1 x_1\dots q_m x_m\neg T(\left\lceil P(\dot x_1,\dots,\dot x_m)\right\rceil)$.
 $T$-rule implies the first equivalences, and the second equivalences are consequences of \eqref{E50} and bivalence of $\mathcal L_T$.
 An application of $T$-rule proves
the universal $T$-schema: \\
(UT)\qquad
$\forall x_1\dots\forall x_m\big{(}T(\left\lceil P((\dot x_1,\dots,\dot x_m)\right\rceil)\leftrightarrow P(x_1,\dots,x_m)\big{)}$.

\begin{example}\label{Ex1} Assume that $L_0$ is the language of arithmetic with its standard interpretation. Let $R(x,y)$ be  formula $x=2y$, and let $R$ be the corresponding predicate  with domain $D_R=\mathbb N_0\times \mathbb N_0$.
Then the truth theories DTT and STT of the extension $\mathcal L_T$ of $L_0$ prove the universal $T$-schema\\
(UTR) \ $\forall x\forall y\big{(}T(\left\lceil R((\dot x,\dot y)\right\rceil)\leftrightarrow R(x,y)\big{)}$,\\
and the logical equivalences\\
(q1) \quad $q_1 x q_2 y T(\left\lceil R(\dot x,\dot y)\right\rceil)\leftrightarrow T(\left\lceil q_1 x q_2 y R(x,y)\right\rceil)\leftrightarrow
q_1 x q_2 yR(x,y)$,\\
(q2) \quad $q_1 x q_2 y\neg T(\left\lceil R(\dot x,\dot y)\right\rceil)\leftrightarrow\neg T(\left\lceil q_1 x q_2 yR(x,y)\right\rceil)\leftrightarrow q_1 x q_2 y\neg R(x,y)$.

The sentences in (q1) are true iff $q_1 q_2$ is $\forall  \exists$ or $\exists \exists$, and false iff  $q_1 q_2$ is $\forall  \forall$ or $\exists \forall$.
In (q2) the sentences are true iff $q_1 q_2$ is $\forall  \forall$ or $\forall \exists$, and false iff $q_1 q_2$ is $\exists \forall$ or $\exists \exists$.
\end{example}

The next example is a modification of an example presented in \cite[p. 704]{Ray18}.

\begin{example}\label{Ex2}  Let $L_0$, $\mathcal L_T$, $T$ and $R(x,y)$ be as in Example 6.1. Denote by $T'$ the predicate with domain $\mathbb N_0$ corresponding to formula $\exists yR(x,y)$. Let the sublanguage $L'_0$  of $L_0$ be formed by the syntax of $L_0$, predicate $T'$ and sentences $\exists y R({\bf n},y)$, where $\bf n$ goes through all numerals, the names of natural numbers $n$. Choose $n$ to be the code number of the sentence $\exists y R({\bf n},y)$ in $L'_0$ for each numeral $\bf n$. If $A$ is a sentence of $L'_0$, i.e. $\exists y R({\bf n},y)$ for some numeral $\bf n$, then $n$ is the code number of $A$, so that $\left\lceil A\right\rceil$ is $\bf n$.
Since $T'({\bf n})$ denotes the sentence $\exists y R({\bf n},y)$ for each numeral $\bf n$, then $T'(\left\lceil A\right\rceil)$ or equivalently $T'({\bf n})$, is true (respectively false) in $L'_0$ iff  $\exists y R({\bf n},y)$, or equivalently $A$,  is true, (respectively false) in $L'_0$ iff (by definition of $R(x,y)$) $n$ is even (respectively odd). Thus $T'$ is a truth predicate of $L'_0$. The restriction of $T$ to  $L'_0$ is also a truth predicate of $L'_0$. It is not equal to $T'$ because the coding of $L'_0$ is not the restriction of the G\"odel numbering of  $\mathcal L_T$, whereas $L_0$ can have  that syntax.
\end{example}

\section{Remarks}\label{R3}

Theories DTT and STT conform to the seven desiderata  presented in \cite{Lei07} for theories of truth (cf. \cite[Theorem 4.2]{Hei18}).
This challenges the current view (cf, e.g., \cite{Fe, Lei07, Raa}). 
Theories DTT and STT are also free from paradoxes.
The fact that the languages $\mathcal L_T$ which have these theories of truth contain their own truth predicates seems to be in contrast to
some Tarski's limiting theorems for theories of truth described, e.g., in \cite{Ray18}. Conformity of theories DTT and STT to the principles of classical logic can be vitiated by adding first-order syntax to $\mathcal L_T$. The object language $L_0$ is allowed to have that syntax.
\smallskip

Theorems \ref{T1} and \ref{T0} imply that theories DTT and STT of truth together contain the theory DSTT of truth introduced in \cite[Theorem 4.1]{Hei18} for languages $\mathcal L^0$ when  those sentences  of the form $A\vee B$, $A\rightarrow B$ and $\neg(A\wedge B)$ whose one component has not a true value are deleted from $\mathcal L^0$. Otherwise the languages $\mathcal L_T$ for which theories DTT and STT of truth are introduced extend languages $\mathcal L^0$. The amount of predicates and compositional sentences are multiplied by means of the added  non-truth predicate $\neg T$, and  predicates of the object language $L_0$ which have several free variables. 
\smallskip

The family of those languages which conform to classical logic is considerably larger than the families of those object languages considered in \cite{Hei18}. For instance, the object language $L_0$ can be any language whose  every sentence is valuated by its meaning either as true or as false. Every language $L_0$ which has properties (i) -- (iii), e.g., every countable and bivalent first-order language with or without identity, conforms to classical logic.  If $L_0$ is any countable language conforming to classical logic, and if $L'_0$ is any sublanguage of $L_0$ formed by the syntax of $L_0$, any nonempty subset of its sentences and any subset of its predicates, then $L'_0$ conforms to classical logic. For instance, the language $L'_0$ in Example \ref{Ex2} conforms to classical logic.
\smallskip

The set $U$ used in the definition of $\mathcal L_T$ is the smallest consistent set for which $U=G(U)$, where $G(U)$ is the set of G\"odel numbers of  sentences of $L(U)$. Thus $U$ is the minimal fixed point of the mapping $G:\mathcal C\to\mathcal C$, where $\mathcal C$ is the set of consistent sets of G\"odel numbers of  sentences of $\mathcal L$. Moreover $\mathcal L_T$ conforms to classical logic. Thus
the sentences of $\mathcal L_T$ are grounded in the sense defined by Kripke in \cite[p. 18]{15}. The language $\mathcal L_\sigma$ determined by the minimal fixed point in Kripke's construction contains also sentences which don't have truth values. For instance,  the sentence
$A\leftrightarrow T(\left\lceil A\right\rceil)$ has not a truth value for every sentence $A$ of $\mathcal L_\sigma$. Thus a three-valued logic is needed in \cite{15}, as well as in \cite{Fe} and in \cite{HH}. The only logic used here is classical.
\smallskip

The equivalence of truth values of sentences in theories DTT and STT show that the notion of 'grounded truth' defined by valuation (I) conforms to the 'ordinary' notion of truth.
\smallskip

In the metalanguage used in the above presentation some concepts dealing with predicates and their domains are revised from those used in \cite{Hei18} so that they agree better with the corresponding concepts in informal languages of  first-order logic (cf. \cite{1}).  The circular reasoning used in \cite{Hei18} to show that $G(U)$ is consistent if $U$ is consistent is corrected in the proof of Lemma \ref{L201}.
\smallskip

Mathematics, especially ZF set theory, plays a crucial role in this paper. Metaphysical necessity of pure  mathematical truths is considered in \cite{Lei18}.

\label{pagefin}
\end{document}